\newcommand{\eps}{\epsilon}
\newcommand{\veps}{\varepsilon}
\newcommand{\mb}[1]{\mathbb{#1}}
\newcommand{\mc}[1]{\mathcal{#1}}
\newcommand{\bs}[1]{\boldsymbol{#1}}
\newcommand{\msf}[1]{\mathsf{#1}}
\newcommand{\tn}[1]{\textnormal{#1}}
\newcommand{\inner}[2]{\langle #1,#2\rangle}
\newcommand{\ind}{{1\hspace{-2.5pt}\tn{l}}}
\newcommand{\R}{\mb{R}}
\newcommand{\Z}{\mb{Z}}
\newcommand{\wh}[1]{\widehat{#1}}
\newcommand{\ME}{\msf{E}}
\newcommand{\VAR}{\msf{VAR}}
\newcommand{\COV}{\msf{COV}}
\newcommand{\BIAS}{\msf{BIAS}}
\newcommand{\lf}[1]{\large{#1}}
\newcommand{\LF}[1]{\Large{#1}}
\newenvironment{pf}[1][Proof]{\begin{proof}[\textit{\textbf{#1}}]} {\end{proof}} 
\theoremstyle{plain}
\newtheorem{theorem}{Theorem}
\newtheorem{lemma}{Lemma} 
\newtheorem{proposition}{Proposition} 
\newtheorem{corollary}{Corollary}
\theoremstyle{definition}
\newtheorem{exam}{Example}
\newtheorem{remark}{Remark}
\title{Autocovariance function estimation via difference schemes for a semiparametric 
change point model with $m$-dependent errors}
\author{Michael Levine$^{(1)}$}
\email{mlevins@purdue.edu}
\address{$^1$Department of Statistics\\ 
Purdue University\\ 
West Lafayette, IN 47907}
\author{Inder Tecuapetla-G\'omez$^{(2,3)}$}
\email[Inder~Tecuapetla]{itecuapetla@conabio.gob.mx}
\address{$^2$Direcci\'on del Programa Investigadoras e Investigadores por M\'exico del CONACyT\\
Consejo Nacional de Ciencia y Tecnolog\'ia (CONACyT)\\
Av.~Insurgentes Sur 1582\\
Col.~Cr\'edito Constructor, Benito Ju\'arez 03940, Ciudad de M\'exico}
\address{$^3$Direcci\'on de Geom\'atica\\
Comisi\'on Nacional para el Conocimiento y Uso de la Biodiversidad (CONABIO)\\
Liga Perif\'erico-Insurgentes Sur 4903\\
Parques del Pedregal, Tlalpan 14010, Ciudad de M\'exico}
\begin{document}
%
\begin{abstract}
We discuss a broad class of difference-based estimators of the autocovariance function
in a semiparametric regression model where the signal consists of the sum
of an identifiable smooth function and another function with jumps (change points)
while the errors are $m$-dependent. We establish that the influence of the smooth part of the signal
over the bias of our estimators is negligible; this is a general result as it does not
depend on the distribution of the errors.
We show that the influence of the unknown smooth function is negligible also in the
mean squared error (MSE) of our estimators. Although we assume Gaussian errors to 
derive the latter result, our finite sample studies suggest that the class of proposed
estimators still show small MSE when the errors are not Gaussian.
Our simulation study also demonstrates that, when the error process is misspecified as an AR$(1)$ 
instead of an $m$-dependent process, our proposed method can estimate autocovariances about 
as well as some methods specifically designed for the AR($1$) case, and sometimes even better 
than them.

We also allow both the number of change points
and the magnitude of the largest jump grow with the sample size. In this case, 
we provide conditions on the interplay between the growth rate of these two quantities,
and the vanishing rate at zero of the modulus of continuity of the smooth part of the regression function, that ensure $\sqrt{n}$ consistency of our autocovariance estimators.

As an application, we use our approach to provide a better understanding of the
possible autocovariance structure of a time series of global averaged annual temperature anomalies.
Finally, the \texttt{R} package \texttt{dbacf} complements this paper. 

\keywords{autocovariance estimation \and change point \and semiparametric model 
\and difference-based method \and m-dependent \and quadratic variation \and total variation}
\end{abstract}
%
%
\maketitle
%



\section{Introduction}~\label{sec.introMain}
\setcounter{section}{1} 
\setcounter{equation}{0} 


Let us begin by considering the nonparametric regression model with correlated errors
\begin{equation}\label{model1}
    y_i = g(x_i) + \varepsilon,\quad i=1,\ldots,n,
\end{equation}
where $x_i$ are the fixed sampling points, $g$ is an unknown mean function
that can be discontinuous, e.g. a change point model or a signal with monotonic trend, and $(\varepsilon)$ is a zero mean 
stationary time series error process. For such a model, the knowledge of the autocovariance function (ACF)
$\gamma_{h}=\ME[\eps_{0}\eps_{h}]$, $h=0,1,\ldots$ is essential. For instance, accounting for
an appropriate estimate of the long-run variance ($\sigma_\ast^2 = \sum_{k\in \Z}\gamma_k$)
plays a crucial role for developing multiscale statistics aiming to either estimate the total number
of change points, cf.~\cite{Dette.etal.2018}, or test for local changes in an apparent nonparametric 
trend, cf.~\cite{Khismatullina.Vogt.20}. Similar models have been considered in \cite{Davis.etal.06},
\cite{Jandhyala.13}, \cite{Preuss.etal.14}, \cite{vogelsang2016exactly} and \cite{Chakar.etal.16} among many others.
Generally speaking, ACF estimates are important for bandwidth selection, confidence interval
construction and other inferencial procedures associated with nonparametric modeling, cf.~\cite{Opsomer.etal.01}.
Some of these authors have considered parametric error structures such as ARMA($p$, $q$) or ARCH(1,1)
models. 
In this manuscript, instead of considering a specific error process model, such as e.g. ARMA($p$,$q$), we will consider an $m$-dependent error structure and and a mean function that consists of both smooth and discontinuous parts. This is also the mean structure considered in \cite{chan2022optimal}.

More specifically, we consider the general regression model with correlated errors
\begin{equation}~\label{eq.partialModel}
    y_i = f(x_i) + g(x_i) + \varepsilon_i, \qquad i = 1,\ldots,n,
\end{equation}
where we assume that $f$ is an unknown continuous function on $[0,1]$,
$x_i = i/n$ are sampling points, $g$ is the stepwise constant function
\begin{equation}\label{piecewise}
    g(x) = \sum_{j=1}^{K}\,a_{j-1} \ind_{[\tau_{j-1}, \tau_{j})}(x), \quad x\in [0,1),
\end{equation}
with $a_{j}\ne a_{j+1}$ and change points at $0=\tau_{0}<\tau_{1} < \cdots < \tau_{K-1}<\tau_{K}=1$;
the levels $(a_j)$, the number of change points $(K)$ and their location $(\tau_j)$
are all unknown. 
We will assume that 
the errors $(\varepsilon_i)$ form a zero mean, stationary, $m$-dependent process, i.e., 
we assume that $\gamma_h = 0$ when $|h| > m$.
To ensure identifiability, we require that $\int_{0}^{1}f(x)\,dx=0$. 

Somewhat similar partial linear regression models (where the function $g(x)$ is a linear function of $x$ that 
does not contain jumps) with correlated errors have a fairly long history in statistical research. 
\cite{engle1986semiparametric} already established, in their study of the effect of weather on electricity 
demand that the data were autocorrelated at order one. \cite{gao1995asymptotic} was probably the first to 
study estimation of the partial linear model with correlated errors. \cite{you2007semiparametric} obtained 
an improved estimator of the linear component in such a model using the estimated autocorrelation structure 
of the process $(\varepsilon)$. 

A model where the mean structure is exactly the same as in \eqref{eq.partialModel} but the errors are iid is 
typically called a \emph{Nonparametric Jump Regression} (NJRM) and was considered in \cite{qiu1998local} 
who were concerned with the jump detection in that model. This model is often appropriate when the mean function 
in a regression model jumps up or down under the influence of some important random events. 
Good practical examples are stock market indices, physiological responses to stimuli and many others. 
Regression functions with jumps are typically more appropriate than continuous regression models for such data. 
A method that can be used to estimate the location curves and surfaces for $2$- and $3$- dimensional versions 
of NJRM was proposed in \cite{chu2012estimation}. Further generalizations to the case where the observed image 
also experiences some spatial blur but the pointwise error remains serially uncorrelated are also available, 
see e.g.~\cite{kang2014jump} and \cite{qiu2015blind}. With this background in mind, our model \eqref{eq.partialModel} 
effectively amounts to the generalization of the 1-dimensional NJRM to the case of serially correlated errors. 

$m$-dependency may be construed as a restrictive model
for correlated structures, 
however, $m$-dependency
is an appropriate proxy for more elaborated dependency measures, within the framework of model~\eqref{model1}, 
provided that the corresponding autocovariance function decays exponentially fast, 
see Section~4 of \cite{Dette.etal.2018}. Since the appearence of the concept of
physical dependence measure, cf.~\cite{Wu2005}, there has been an increasing theoretical interest
for using $m$-dependency to approximate general dependence structures, \cite{Berkes.etal.2009},
\cite{Berkes.etal.2013}, \cite{dette2023}.
 {Note also that there is a number of practically important cases where the value of $m$ may be known beforehand as in e.g. \cite{Hotz.etal.13} and \cite{pein2018fully}.}
Thus the relevance of this work lies precisely in providing a family of ACF estimators that 
circumvent the difficult estimation of a mean function which consists of both a change point 
component and a smooth function $f$. To this end we will focus on the family of difference-based estimators.

Difference-based estimators can be traced back to \emph{the mean successive difference} of \cite{vonNeumann.etal.41}.
Since then this computationally efficient variance estimator has been studied with many different purposes in mind.
For instance, in nonparametric regression with smooth signals and homoscedastic errors, it has been considered for improving 
bandwidth selection (\cite{Rice.84}) and asymptotic efficiency of variance estimators (\cite{Gasser.etal.86} and \cite{Hall.etal.90});
it has also been considered for variance estimation under heteroscedasticity of the errors (\cite{Muller.Stadtmuller.87}).
\cite{Dette.etal.98} have discussed that for a small sample size, a difference-based variance estimator may have a non-negligible 
bias and to overcome this issue, optimal sequences can be employed.

Difference-based estimators have also been considered in smooth nonparametric regression with correlated errors,
e.g.~\cite{Muller.Stadtmuller.88}, \cite{Herrmann.92}, \cite{Hall.VanKeilegom.03} and \cite{Park.etal.06}.
More recently, and close to our work, optimal variance difference-based estimators have been proposed
in the standard partial linear model under homoscedasticity of the errors, e.g.~\cite{Wang.etal.17} and 
\cite{Zhou.etal.18}, among others. \cite{wang2017asymptotically} studied the optimal difference-based estimator for variance. 
\cite{dai2015difference} proposed difference-based variance estimator for repeated measurement data. 
{ \cite{Tecuapetla.Munk.2017} 
studied ACF estimation via difference-based estimators of second order and $(m+1)$-gap 
in the model~\eqref{eq.partialModel} when $f=0$.}
To the best of our knowledge the ACF estimation problem via difference schemes in the 
Eq.~\eqref{eq.partialModel}, has not been considered.

Perhaps the first contribution of this paper is the rather conceptual extension of 
model~\eqref{model1} (considered in \cite{Tecuapetla.Munk.2017}) to model~\eqref{eq.partialModel}.
Note that the setup considered here is not a special case of the setup in Section 4
of \cite{Tecuapetla.Munk.2017}. There, it is specified that there exists a positive
number $c$ such that all of the jumps are greater than $c$ in absolute value.
Thus, the mean function is guaranteed to have a certain number of discontinuities
$K_n$ that may depend on $n$. On the contrary, the setup consider in this work
does not require the existence of such a constant $c$ and so the existence of a given
number of discontinuities is not guaranteed.
Moreover, in Section 4 of \cite{Tecuapetla.Munk.2017} it is required that, on every
interval $[\tau_j,\tau_{j+1})$, the regression function must be H\"older continuous
with some index $\alpha_j$ whereas in our setup the smooth component $f$ of the regression
function is only continuous.
Additionally, the separation of signal in two parts, 
$g$ which contains jumps and $f$ which is rather smooth, 
allows us to establish all of our main results under the assumption that $f$ is 
continuous on $[0,1]$.


{The contributions of this paper continue with} the explicit derivation of the expected value
of difference-based variance estimator of \emph{arbitrary} order $\ell$ and $(m+1)$-gap, see Eq.~\eqref{eq.dbQF} for
a proper definition of this class of estimators and Theorem~\ref{theo.DBE-partial} for the result itself. 
An immediate consequence of this result is that, if in the semiparametric change point model given by \eqref{eq.partialModel}
we assume that the largest jump is bounded (the bound does not depend on $n$) and the total 
variation of the function $g$ is of order $o(n)$, 
then any estimator within the class~\eqref{eq.dbQF} is asymptotically unbiased. 
We stress that Theorem~\ref{theo.DBE-partial} does not require us to assume a specific distributional family for the error model.
As explained in the remarks following Theorem~\ref{theo.DBE-partial},
an increment in the (arbitrary) order $\ell$ increases the bias of any member of
the variance estimator class~\eqref{eq.dbQF};
the magnitude of this increase is of the order of the product of $m+1$ times
the quadratic variation of the function $g$ times an increasing function depending on $\ell$. 
Observe that among the latter quantities we can only control the lag order $\ell$. 
Consequently, we opt for studying some asymptotic properties
of the member of class~\eqref{eq.dbQF} with the smallest (finite sample) bias, namely, the variance estimator of order 1,
\begin{equation}\label{eq.gamma0.hat}
	\wh{\gamma}_0 = \frac{1}{2\,n_{m+1}}\sum_{i=1}^{n_{m+1}}\left( y_i - y_{i + (m+1)} \right)^2,
	\quad
	n_{m+1} = n - (m+1).
\end{equation}
In order to estimate the remaining values of the autocovariance function 
we focus on the estimator
\begin{equation}\label{eq.gamma.h.hat}
	\wh{\gamma}_h 
	= 
	\wh{\gamma}_0 - \frac{1}{2(n-h)}\sum_{i=1}^{n-h}\left( y_i - y_{i + h} \right)^2,\quad h = 1,\ldots, m.
\end{equation}
Also from the remarks of Theorem~\ref{theo.DBE-partial} it follows that 
in the semiparametric change point model with $m$-dependent
stationary errors given by \eqref{eq.partialModel} 
even when the number of change points grows at rate $o(n^{\epsilon})$ and the 
size of the largest jump grows at rate $o(n^{-\epsilon})$ then, for any $\epsilon > 0$, our autocovariance
estimators \eqref{eq.gamma0.hat}-\eqref{eq.gamma.h.hat} are $\sqrt{n}$ consistent provided
that the modulus of continuity vanishes at zero at the rate $o(n^{-1/4})$ as $n\to \infty$, see Theorem~\ref{eq.root-n}. 
Among our main results, this is the only one in which we utilized a distributional form for the error model (Gaussian).
Theorem~\ref{eq.root-n} is the second major contribution of this paper.

Section~\ref{sec.resultsGeneral} contains preliminary
calculations needed to establish the main results of this paper. In Section~\ref{sec.sim} 
we conducted several numerical studies to assess the accuracy and precision of our estimators in the setting of 
model \eqref{eq.partialModel}-\eqref{piecewise}; we also assess the robustness of our method by considering
$m$-dependent errors with heavy-tailed marginal distribution. 
Separately, we consider the robustness of our method to reasonably small violations of the $m$-dependency requirement for the error process. This is done by using our method to estimate autocovariances when the true error process is an AR($1$) process. This set-up is meaningful since the autocovariances of the AR($1$) process at lag $k$ are proportional to 
$\phi^{|k|}$.
Thus, if $\phi$ is chosen to be small in absolute value, the autocovariances of AR($1$) process decay quite quickly and become very small after a finite number of lags. We compare the performance of our method under the AR($1$) error assumption to two alternative methods. First, introduced in the paper \cite{Hall.VanKeilegom.03}, was designed specifically for the case of a smooth mean function. The second method is that of \cite{Chakar.etal.16}. That method was designed as a robust estimator of the autoregressive parameter of an AR(1) error process in a model with a mean that has a finite number of breakpoints. Since this method is not meant to provide a direct estimate of the autocovariance function at lag $k$, this function has to be obtained using standard formulas that connect the autoregressive coefficient of an AR($1$) process and its autocovariance at lag $k$ given in any introductory time series text e.g.~\cite{shumway2019time} p.~$77$. All of our simulations are based on functions provided by the R package 
\href{https://cran.r-project.org/package=dbacf}{dbacf}, available on The Comprehensive R Archive 
Network, of \cite{dbacf}.

{Section~\ref{sec.apps} shows that our method can be employed to provide a better understanding of the ACF structure of global, averaged, annual temperature anomalies spanning
from 1808 to 2021.}  This dataset can be found
in the package \texttt{astsa} by \cite{Stoffer.Poison.23} developed under the \texttt{R} language
for statistical computing, cf.~\cite{R}.
Finally, some technical details used in our proofs are relegated to Appendices~\ref{sec.supp}
and \ref{sec.appendix}.

%


\section{Main results}~\label{sec.mainResults}

Before introducing the class of difference-based estimators we require some notation.
First, for any $i<j,$ we will use the notation $i:j$ for an index 
vector $(i,i+1,\ldots,j)$. Thus, for a generic vector $\bs{v}^\top = ( v_1,\, v_2, \, \ldots, \, v_n )$, 
for $1\leq i < j \leq n$, we define $\bs{v}_{i:j}^\top = ( v_i, \, v_{i+1}, \, \ldots, \, v_j )$. 
Also, $f_i$ and $g_i$ will denote $f(x_i)$ and $g(x_i)$.
Thus, $\bs{f}_{i:j}^\top$, $\bs{g}_{i:j}^\top$ and $\bs{\varepsilon}_{i:j}^\top$ denote the vectors 
$( f(x_i), \, \ldots, \, f(x_j) )$,
$( g(x_i), \, \ldots, \, g(x_j) )$ and 
$( \varepsilon_i, \, \ldots, \, \varepsilon_j )$,
respectively.  
The quadratic variation of the function $g(\cdot)$ will be denoted by $J_{K}:=\sum_{j=0}^{K-1}(a_{j+1}-a_{j})^{2}$. 
For vectors $\bs{v}$ and $\bs{w}$, $\inner{\bs{v}}{\bs{w}}$ denotes their Euclidean inner product.
From now on $f:[0,1]\to \R$ is a continuous function. Such a function is, of course, a uniformly
continuous one. Due to this, there exists a function
$\omega(\cdot)$, called a modulus of uniform continuity, such that $|f(x)-f(y)|\leq \omega(|x-y|)$ for all $x,y\in [0,1]$. This function is vanishing at zero, right-continuous at zero, and strictly increasing
over the positive half of the real line. The function 
$g$ obeys \eqref{piecewise}. 

Let $\ell\geq 1$ be given such that $n_{\ell_{m+1}} = n - \ell(m+1) \gg 0$. It is known that in the change point
regression with $m$-dependent errors model, which is a particular case of \eqref{eq.partialModel},
in order to get a consistent variance estimator based on difference schemes it is necessary to
consider observations which are separated in time by at least $m+1$ units, cf.~Theorem~5 of
\cite{Tecuapetla.Munk.2017}. That is, a consistent variance difference-based estimator must consider 
\emph{gaps} of size (at least) $m+1$ observations. Due to this we utilize the vector of weights 
$\bs{d}^\top = ( d_0, \, d_1, \, \ldots, \, d_\ell )$ and define a difference of 
order $\ell$ and a $(m+1)$-gap as $\Delta_{\ell,m+1}(y_i; \bs{d}) = \sum_{s=0}^\ell\,d_s\,y_{i + s(m+1)}$. 
We utilize this object to define a difference-based variance estimator of order $\ell$ 
and $(m+1)$-gap as the quadratic form
\begin{equation}~\label{eq.dbQF}
    \bs{Q}_{\ell,m+1}(\bs{y};\bs{d}) 
    =
    \frac{1}{p(\bs{d})\,n_{\ell_{m+1}}}\sum_{i=1}^{n_{\ell_{m+1}}}\Delta_{\ell,m+1}^2(y_i; \bs{d}), 
    \quad
    p(\bs{d}) = \sum_{s=0}^{\ell}\,d_s^2.
\end{equation}
Throughout the paper, in order to simplify notation, we may omit $\bs{d}$ inside the parentheses for 
observation differences (and quadratic forms) and simply write $\Delta_{\ell,m+1}(y_i)$ 
(and $\bs{Q}_{\ell,m+1}(\bs{y})$), unless any confusion results from such an omission.

\begin{exam}
For $m>0$, $\ell=1$, $d_0=1$ and $d_1=-1$, $\Delta_{1,m+1}^2(\varepsilon_i;\bs{d})$ becomes 
$\varepsilon_{i}^2-2\varepsilon_i\,\varepsilon_{i+m+1}+\varepsilon_{i+m+1}^2$. 
$m$-dependence guarantees that $\ME [\varepsilon_i\,\varepsilon_{i+m+1}]=0$ 
for any $i$. Hence, the expected value of the \emph{core statistic} $\Delta_{1,m+1}^2(\varepsilon_i;d)$ 
is equal to $2\sigma^2$. For the general difference-based estimator given by Eq.~\eqref{eq.dbQF}, 
the expected value of $\Delta_{\ell,m+1}^2(\varepsilon_i; \bs{d})$ is equal to $p(\bs{d})\,\sigma^2$.
\end{exam}

In order to obtain a clear representation of the part of the bias of $\bs{Q}_{\ell,m+1}(\bs{y})$
which is directly linked to the stepwise constant function $g$ we assume the following 
restriction on the distance between the jumps,
\begin{equation}\label{eq.DistBetweenJumps}
    \min_{1\leq i\leq {K-1}}| \tau_{i+1} - \tau_{i} | > \ell(m+1)/n.
\end{equation}
Observe that this condition ties up together the distance between change points,
the depth of dependence $m$ and the lag order $\ell$. 
Whether there exist alternative conditions
to \eqref{eq.DistBetweenJumps} which are less restrictive and
allow us to get a simple description of this bias term is an interesting research question
which is outside the scope of the present work.\medskip 

\begin{theorem}~\label{theo.DBE-partial}
 Consider the semiparametric change point regression model with zero mean, stationary, $m$-dependent errors
 defined through Eqs.~\eqref{eq.partialModel}-\eqref{piecewise} and
 assume that the condition~\eqref{eq.DistBetweenJumps} is satisfied. Define
 \[
	J_K^{||} = \sum_{j=0}^{K-1}\,| a_j - a_{j+1} |.
 \]
    Assume further that 
    \begin{equation}~\label{eq.DBE-partial.conditions}
     \sum_{s=0}^\ell\,d_s = 0 \quad \mbox{ and } \quad p(\bs{d}) = 1. 
    \end{equation}    
    Then,
    \begin{align*}
        \ME[ \bs{Q}_{\ell,m+1}(\bs{y}) ]
        &=
        \gamma_0 
        + 
        o\left( \frac{1}{n_{\ell_{m+1}}}\, \omega^2\left( \frac{m+1}{n} \right) \right)
        +
        o\left( \omega^2\left( \frac{m+1}{n} \right) \right)\\
        &+
        o\left( \omega\left( \frac{m+1}{n} \right)\,\frac{(m+1)R_\ell(\bs{d})\,J_K^{||}}{n_{\ell_{m+1}}} \right)
        +
        o\left( \frac{(m+1)P_\ell(\bs{d})\,J_K}{n_{\ell_{m+1}}}  \right),
    \end{align*}
	where
	\begin{align*}
	R_\ell(\bs{d})
	&=
	\sum_{r=0}^{\ell-1}\,\sum_{s=0}^{r}|d_s|\,
	\sum_{r=0}^{\ell-1}(\ell-r)\,\sum_{s=0}^{r}|d_s|,\\
	P_\ell(\bs{d})
	&=
    \sum_{r=0}^{\ell-1}(\ell-r)\left( \sum_{s=0}^r d_s \right)^2
    +
    2\times\ind_{[2,\infty)}(\ell)\times
    \left(
    \sum_{r=0}^{\ell-2}(\ell-1-r)
    \sum_{s=0}^r\,d_s\,\sum_{p=r+1}^{\ell-1}\sum_{q=0}^{s}\,d_q
    \right).
	\end{align*}
\end{theorem}\medskip

\begin{remark}~\label{rm1}
Note that the magnitude of the bias depends strongly on the quadratic variation $J_K$. 
Note also that $J_{K}^{||}$ is effectively the total variation of the function 
$g(x)=\sum_{j=1}^{K}a_{j-1} \ind_{[\tau_{j-1}, \tau_{j})}(x)$
and that one can guarantee that 
$J_{K}$ is growing relatively slowly with $n$ by imposing a condition on $J_{K}^{||}$. 
Indeed, it is clear that the quadratic variation 
$J_{K}\le \left(\max_{1\le j \le K}|a_{j-1}-a_{j}|\right)\left(\sum_{j=1}^{K}|a_{j-1}-a_{j}|\right)
=
\max_{1\le j \le K}|a_{j-1}-a_{j}|\, \times\, J_{K}^{||}$.
Therefore, it is enough to impose a bound on the growth rate of $J_{K}^{||}$ and on the growth rate 
of the maximum jump size $\max_{1\le j \le K}|a_{j-1}-a_{j}|$ to guarantee a reasonably low rate of 
growth for $J_{K}$.
\end{remark}
\begin{remark}
{The normalization requirement ~\ref{eq.DBE-partial.conditions} guarantees asymptotic unbiasedness of $\bs{Q}_{\ell,m+1}(\bs{y})$ as an estimator of $\gamma_0$. Moreover, the result obtained in this Theorem is independent of the order of $d$s in the vector $\bs{d}$.}
\end{remark}

Observe that these considerations carry over when in the model~\eqref{eq.partialModel}-\eqref{piecewise}
we allow that the number of change points and maximum jump size depend on $n$. 
More precisely,

\begin{corollary} 
	Suppose that the conditions of Theorem~\ref{theo.DBE-partial} hold. 
	Additionally, suppose that we allow the number of change points to depend on
	the sample size. 
	Suppose further that $\max_{1\le j \le K_n}|a_{j-1}-a_{j}|$
	is bounded by a constant not depending on $n$, and $J_{K_n}^{||} = o(n)$.
	Then $\ME[ \bs{Q}_{\ell, m+1}(\bs{y};\bs{d}) ]\to\gamma_0$.
\end{corollary}

\begin{remark}
Observe that thus far we have not made any distributional assumption on the errors $(\varepsilon_i)$;
null mean, stationarity and $m$-dependence are sufficient to establish that, asymptotically, the 
influence of the smooth part of the regression function, i.e.~$f$, on the bias of the general difference-based 
variance estimator $\bs{Q}_{\ell,m+1}(\bs{y})$ is negligible. Moreover, this result remains true regardless of
the order $\ell$.
\end{remark}

\begin{remark}
Theorem~\ref{theo.DBE-partial} also provides a hint as to what class of autocovariance estimators may be useful in practice. More precisely, note that the quantity $R_{\ell}(d)$ is always non-negative and monotonically increasing as a function of the order $\ell$. The same is true for the first term of $P_{\ell}(d)$ as well. This suggests that, especially for relatively small sample sizes $n$,  it is possible that the increase in the order $\ell$ of the difference-based estimator may increase its bias. Thus, from a practical viewpoint, it may not make a lot of sense to consider 
difference-based estimators of the variance (and autocovariances) of the error process $(\varepsilon_i)$ for 
$\ell > 1$. Moreover, the condition \eqref{eq.DistBetweenJumps} implies that, if we want to use larger $\ell$, it is necessary to impose a more stringent condition on the change points to guarantee the same order of the bias. In other words, we would have to assume that change points are farther apart in such a case which may not always be a realistic assumption.
\end{remark}


In light of the above, the rest of the paper will be devoted to establishing some asymptotic 
properties of the difference-based estimators of first order, $(m+1)$-gap, and weight 
$\bs{d}_1=(1/\sqrt{2}, -1/\sqrt{2})$, $\bs{Q}_{1,m+1}(\bs{y}; \bs{d}_1)$. 
Note that this estimator is equivalent to $\wh{\gamma}_0$ introduced in Eq.~\eqref{eq.gamma0.hat}:
\begin{equation}\label{eq.GammaHat}
\wh\gamma_{0} := \bs{Q}_{1,m+1}(\bs{y}; \bs{d}_1).
\end{equation}
The autocovariances $\gamma_h$ with $h=1,\ldots,m$ will be estimated 
using the following difference of random quadratic forms,
\begin{equation}~\label{eq.GammaHatH}
    \wh{\gamma}_h 
    := 
    \bs{Q}_{1,m+1}(\bs{y};\bs{d}_1) - \bs{Q}_{1,h}(\bs{y}; \bs{d}_1 ),\quad h=1,\ldots,m.  
\end{equation}
Observe that \eqref{eq.gamma.h.hat} and \eqref{eq.GammaHatH} are equivalent.


Remark~\ref{rm1} points at the direction on imposing conditions on the total variation of the
stepwise constant function $g$ in order to obtain appropriate convergence rate of our estimators.
The following result tells us a bit more. Indeed, we can allow the number of change points, $K,$
to depend on the sample size $n$ and yet obtain appropriate rates of convergence for the
estimators \eqref{eq.GammaHat} and \eqref{eq.GammaHatH}. This is possible through an interplay
between the growth rate of the number of change points and the size of the largest jump.
More precisely, \medskip

\begin{theorem}~\label{eq.root-n}
 Let $\epsilon > 0$ be given. Suppose that the conditions of Theorem~\ref{theo.DBE-partial}
 are satisfied with $\ell=1$. 
 Additionally, suppose that we allow the number of change points to depend on the sample size, 
 say we have $K_n$ change points. Assume also that the errors are Gaussian and furthermore
 that
\[
	\omega\left( \frac{m+1}{n} \right) = o(n^{-1/4}),\quad K_n=o(n^\epsilon)\quad \mbox{and }\max_{1\leq j\leq K_n}| a_{j} - a_{j-1} | = o (n^{-\epsilon}).
\]
 Then, for $h=0,\ldots, m$
 \[
  \BIAS ( \sqrt{n} \, \wh{\gamma}_h ) 
  = 
  o(1)\quad \mbox{and }\quad
  \VAR( \sqrt{n} \, \wh{\gamma}_h )
  =
  \mc O (1).
 \]
\end{theorem}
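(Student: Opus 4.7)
The proof separates into bounding the bias and the variance of $\sqrt{n}\,\wh{\gamma}_h$. The common starting point is to rewrite each estimator as a quadratic form $\bs{y}^\top A \bs{y}$ for a sparse symmetric matrix $A$ determined by $\bs{d}$ and the gap, and then to split $\bs{y} = \bs{\mu} + \bs{\eta}$ with $\bs{\mu} = F + G$ deterministic and $\bs{\eta}\sim \mc{N}(\bs{0},\Sigma)$ a centered Gaussian whose covariance $\Sigma$ is $m$-banded by the $m$-dependence hypothesis.

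For the bias I plan to apply Theorem~\ref{theo.DBE-partial} directly to $\wh{\gamma}_0 = Q_{1,m+1}(\bs{y};\bs{d})$, and to view $\wh{\gamma}_h = \wh{\gamma}_0 - Q_{1,h}(\bs{y};(1,-1))$, $1\le h\le m$, as the difference of two such quadratic forms so that the same theorem applies to each summand. The remaining work is to translate the growth hypotheses into estimates of $J_{K_n}$ and $J_{K_n}^{||}$ via the elementary bounds $J_{K_n}^{||} \le K_n \max_j |a_j - a_{j-1}| = o(n^{1/2 - \epsilon/2})$ and $J_{K_n} \le K_n (\max_j |a_j - a_{j-1}|)^2 = o(n^{1/2})$. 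Substituted into the bias formula of Theorem~\ref{theo.DBE-partial}, these bounds---together with the finer cross-term quantity $H_{K_n,h}^{||}$, which captures the first-order interaction between $f$ and the jumps of $g$ and records the cancellation between the two quadratic forms appearing in $\wh{\gamma}_h$---drive $\BIAS(\sqrt{n}\,\wh{\gamma}_h)$ to $o(n^{-1})$.

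For the variance I rely crucially on the Gaussian assumption. For a generic quadratic form $Q = \bs{y}^\top A \bs{y}$ the odd moments of $\bs{\eta}$ vanish, so Wick's theorem yields the clean identity
\[
 \VAR(Q) = 4\,\bs{\mu}^\top A\,\Sigma\, A\,\bs{\mu} + 2\,\mbox{tr}\bigl((A\Sigma)^2\bigr).
\]
The trace term is $\mc{O}(n)$ because $\Sigma$ is $m$-banded and each row (and column) of $A$ has at most two non-zero entries, so it contributes $\mc{O}(n^{-1})$ to $\VAR(\wh{\gamma}_0)$. The $\bs{\mu}$-quadratic term is bounded by $\mc{O}(m)\,\|\bs{\mu}\|_2^2$; crucially, $\|\bs{\mu}\|_2^2 = 2 n_m\,p(\bs{d})(\ME[\wh{\gamma}_0] - \gamma_0)$ is exactly the quantity estimated in the bias step, hence $o(n^{1/2})$ under our hypotheses. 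Thus $\VAR(\sqrt{n}\,\wh{\gamma}_0) = \mc{O}(1)$, and the decomposition~\eqref{eq.GammaHatH}, combined with $\VAR(X-Y)\le 2\VAR(X) + 2\VAR(Y)$, extends the bound to $\wh{\gamma}_h$ for $1\le h\le m$.

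The main obstacle is controlling the interaction term $\bs{\mu}^\top A\Sigma A\bs{\mu}$: the mean vector $\bs{\mu}$ is $\mc{O}(n^{-1})$ on most indices but of size $\mc{O}(\max_j|a_j-a_{j-1}|)$ on the $\mc{O}(K_n(m+1))$ indices sitting in an $(m+1)$-window of a change point, so bookkeeping its contribution is precisely where the joint growth rates $K_n = o(n^{1/2-\epsilon})$ and $\max_j|a_j - a_{j-1}| = o(n^{\epsilon/2})$ must conspire. Only their combined effect yields $\|\bs{\mu}\|_2^2 = o(n^{1/2})$ and hence $\sqrt{n}$-consistency for every $\epsilon\in(0,1/2)$; the threshold exponent $1/2$ in the statement can be traced back to this trade-off.
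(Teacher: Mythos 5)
Your proposal is correct in substance and reaches the same conclusion, but the variance half is executed by a genuinely different (and more compact) route than the paper's. The paper proves Theorem~\ref{eq.root-n} in three lines: it derives $J_{K_n}=o(\sqrt{n})$, $J_{K_n}^{||}=o(n^{(1-\epsilon)/2})$ and $H_{K_n,h}^{||}=o(n^{(3-\epsilon)/2})$ from the growth hypotheses exactly as you do, and then simply plugs these orders into the already-established expansions \eqref{eq3.prop.1} (bias) and \eqref{eq.VAR.GAMMA0}--\eqref{eq.VAR.GAMMAh} (variance); those expansions are themselves obtained by expanding $\VAR(\sum_i\Delta_m^2(y_i))$ term by term with the Isserlis fourth-moment identities \eqref{trianta} and a chain of combinatorial lemmas, including an explicit computation of $\COV(Q_{1,m+1},Q_{1,h})$. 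You instead invoke the matrix form of Wick's theorem, $\VAR(\bs{y}^\top A\bs{y})=4\bs{\mu}^\top A\Sigma A\bs{\mu}+2\,\mbox{tr}((A\Sigma)^2)$, bound the trace term by bandedness and the signal term by an operator-norm argument, and dispose of the cross-covariance entirely via $\VAR(X-Y)\le 2\VAR(X)+2\VAR(Y)$. This buys brevity and suffices for the stated $\mc{O}(1)$ bound, at the cost of losing the explicit leading constant $\frac{2m+3}{n}\gamma_0^2$ that the paper's finer computation delivers. Two points need repair before your argument is airtight. First, the quantity controlling $\bs{\mu}^\top A\Sigma A\bs{\mu}$ is not $\|\bs{\mu}\|_2^2$ (which is $\mc{O}(n)$, since the levels $a_j$ need not be small) but the differenced quadratic variation $\sum_i\Delta_{m+1}^2(f_i+g_i)=n_mp(\bs{d})(\ME[\wh{\gamma}_0]-\gamma_0)=\mc{O}(J_{K_n})=o(\sqrt{n})$; your own identification shows you mean the latter, but the bound as written with the literal Euclidean norm would be useless. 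Second, $H_{K_n,h}^{||}$ does not enter the bias at all --- \eqref{eq3.prop.1} involves only $J_K$ and $J_K^{||}$; in the paper (and in your framework) it arises in the variance, through the cross term $\sum_{i,j}\Delta(f_i+g_i)\Delta(f_j+g_j)\gamma_{|j-i-h|}$ of the gap-$h$ quadratic form, i.e.\ inside $\bs{\mu}^\top A_h\Sigma A_h\bs{\mu}$.
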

 

In Theorem~\ref{eq.root-n} we have utilized Gaussianity of the errors; this allows us
to compute moments of fourth order of the quadratic forms $\bs{Q}_{1,h}({\bs y})$, 
$h=1,2,\ldots,m$ explicitly.
Moreover, Gaussianity of the errors has been exploited in many influential publications
in the change point literature, see \cite{Fryzlewicz.14} and \cite{Frick.Munk.Sieling.14} among many others.
More recently, Gaussianity has contributed to establishing an explicit form of the asymptotic
minimax detection boundary for a change point model with dependent errors,
cf.~\cite{Enikeeva.etal.2019}.
Rather than being a limitation, these examples argue in favor of Gaussianity as a means to pave the 
way to obtain general results. In our work, assuming normal errors has allowed us to reveal that 
even when the number of change points tends to infinity (at the rate $o(n^{\epsilon})$), and the largest jump grows at the rate $o(n^{-\epsilon})$ 
($\epsilon>0$), there is a $\sqrt{n}$-consistent class of autocovariance function estimators 
in the semiparametric change point model~\eqref{eq.partialModel}-\eqref{piecewise} provided
that the rate at which the modulus of continuity vanishes at zero is $o(n^{-1/4})$.

\section{Asymptotic properties of autocovariance difference-based estimators}~\label{sec.resultsGeneral}

The results of this section are proven in the Appendix~\ref{sec.supp}.
We will denote $n_{m+1}:=n-(m+1)$ and $n_{h}:=n-h,$ $h=1,\ldots,m$.
The following proposition provides expressions for the bias 
of estimators $\wh{\gamma}_h$, $h=0,\ldots, m$; its proof is omitted as 
it can be deduced from that of Theorem~\ref{theo.DBE-partial}.

 \begin{proposition}~\label{prop.1}
    Suppose that the conditions of Theorem~\ref{theo.DBE-partial} are satisfied and assume that $\ell=1$. 
    Then, 
\begin{align*}
	\ME [ \bs{Q}_{1, m+1}( {\bs y}; \bs{d}_1) ]
	&=
	\gamma_0 + o\left( r_{n_{m+1}}( m+1, \omega, J_K, J_K^{||} ) \right)\\
	\ME [ \bs{Q}_{1, h}( {\bs y}; \bs{d}_1) ]
	&=
	\gamma_0-\gamma_h + o\left( r_{n_{h}}( h, \omega, J_K, J_K^{||} ) \right),
\end{align*}
where
\[
	r_n(x, \omega, J_K, J_K^{||})
	=
	\left\{ \left[ \frac{1}{n} + 1 \right]\omega\left( \frac{x}{n} \right) + \frac{x\,J_K^{||}}{\sqrt{2}n} \right\}
	\omega\left( \frac{x}{n} \right)
	+
	\frac{x\,J_K}{2n}.
\]
  
  Consequently, for $h = 0,\ldots,m$,
  \begin{align}~\label{eq3.prop.1}
    \ME[ \wh{\gamma}_h ] 
    &= 
    \gamma_h 
    + 
    o\left( \left\{ \left[\frac{1}{n}+1\right] 
	  \omega\left( \frac{m+1}{n} \right) + \frac{(m+1-h)\,J_K^{||}}{\sqrt{2}\,n} \right\}\,
	  \omega\left( \frac{m+1}{n} \right) \right)\notag\\
	&+
	  o\left( \frac{(m+1-h)\,J_K}{2\,n} \right).
  \end{align}  
\end{proposition}\medskip

\begin{subsection}{On the variance of $\wh{\gamma}_0$}~\label{subSec.variance}

Now we focus on computing the variance of $\wh{\gamma}_0$, or equivalently 
the variance of the difference-based estimator of order 1, gap $m+1$ and
weights $\bs{d}_1=(1/\sqrt{2}, -1/\sqrt{2})$, see Eq.~\eqref{eq.GammaHat}. 
To state the main result of this section,
we first define an additional quantity of our model: 
$H_K^{||}=\sum_{j=1}^{K}\left(t_j-\frac{m+1}{2}\right)|a_{j-1}-a_{j}|$, where
$t_j = \lfloor n \tau_{j} \rfloor$.
\begin{theorem}\label{th_var}
	Suppose that the conditions of Theorem~\ref{theo.DBE-partial} are satisfied and assume that $\ell=1$.
    Additionally, assume that the errors are Gaussian.
Then, 
\begin{align}~\label{eq.VAR.GAMMA0} 
	&\VAR (\wh{\gamma}_0)
	=
	\frac{2m+3}{n_{m+1}}\,\gamma_0^2 \notag\\
	&+
	o\left( 
	\frac{4\omega^2\left(\frac{m+1}{n}\right)}{n_{m+1}} 
	+
	\vartheta_1(n,m)\,\frac{J_K}{n_{m+1}}	
	+
	\vartheta_2(n,m)\,\omega\left(\frac{m+1}{n}\right)\frac{J_K^{||}}{n_{m+1}}
	-
	2\omega\left(\frac{m+1}{n}\right)\mc O \left( \frac{mH_K^{||}}{n_{m+1}^2} \right)
	\right)\,\gamma_0,
\end{align}
where
\begin{align*}
	\vartheta_1(n,m)
	&=
	(m+1)\left( \frac{2m+1}{2n_{m+1}} \right)\\
	\vartheta_2(n,m)
	&=
	(m+1)\left( \frac{(2-\sqrt{2})m + 1}{2n_{m+1}} \right).
\end{align*}
\end{theorem}

\begin{remark}
If $m$ is finite, the main term of the expansion \eqref{eq.VAR.GAMMA0} converges to zero at the rate of $O\left(\frac{1}{n}\right)$. The higher order terms, however, depend, first, on the modulus of continuity of the smooth function $f$ $\omega(\cdot)$ and, second, quantities $J_{K}$, $J_{K}^{||}$, and $H_{K}^{||}$. The latter three quantities reflect the behavior of the function $g$; for example, $J_{K}$ is its quadratic variation, $J_{K}^{||}$ is its absolute variation, and $H_{K}^{||}$ is closely related to the absolute variation. The behavior of the higher order terms is dependent on the interplay between these various quantities. As an example, the third of this terms $\vartheta_2(n,m)\,\omega\left(\frac{m+1}{n}\right)\frac{J_K^{||}}{n_{m+1}}$ suggests that one may have $J_{K}^{||}$ increasing relatively fast with $K$ if the modulus of continuity goes to zero sufficiently quickly as its argument goes to zero. If one is ready, however, to consider $m\rightarrow \infty$ together with $n$, the condition $m=o(n)$ is absolutely necessary to guarantee consistency of the proposed estimator. The higher order terms may necessitate additional requirements, again depending on the relationship between the rate at which $J_{K}$, $J_{K}^{||}$, and $H_{K}^{||}$ go to infinity with $K$, the rate at which $m$ goes to infinity with $n$, and the behavior of the modulus of continuity of the smooth function $f$. 
\end{remark}

\end{subsection}\medskip

\begin{subsection}{On the variance of $\wh{\gamma}_h$}~\label{subSec.covariance}
In this section we characterize the asymptotic behavior of the autocovariance estimator $\wh{\gamma}_h,$ $h=1,\ldots,m$ 
introduced in~\eqref{eq.GammaHatH}. We stress that the proof of the main result
of this section is based on derivations used in the proof of Theorem~\ref{th_var}
presented in Section~\ref{subSec.variance}. Also,
we utilize the series of Lemmas established in Appendix~\ref{sec.appendix}. 
Now, we state a result about the variance of $\wh{\gamma}_h$. 

\begin{theorem}~\label{theo_var_GAMMAh}
Suppose that the conditions of Theorem~\ref{th_var} are satisfied.
Then, 
\begin{align}~\label{eq.VAR.GAMMAh}
    \VAR( \wh{\gamma}_h ) 
    &=
	\frac{\vartheta_3(m,h,\gamma(\cdot))}{n_{m+1}}
	+
	\vartheta_4(n,m,h,\gamma_0)\,\frac{\omega^2(\frac{m+1}{n})}{n_{m+1}}
	+
	\vartheta_5(n,m,h,\gamma(\cdot))\,\frac{J_K}{n_{m+1}}\notag\\
	&+
	\vartheta_6(n,m,h,\gamma(\cdot))\,\omega\left( \frac{m+1}{n} \right)\,\frac{J_K^{||}}{n_{m+1}}
	-
	\omega\left(\frac{m+1}{n}\right)\,\mc O \left( \frac{2mH_K^{||}}{n_{m+1}^2} \right),
\end{align}
where
\begin{align*}
	\vartheta_3(m,h,\gamma(\cdot))
	&=
	(18m+4h-13)\gamma_0^2 + 2(\gamma_0-\gamma_h)^2\\
	\vartheta_4(n,m,h,\gamma_0)\
	&=
	2
	+
	\left( 
	2(m+h) + 4 - \frac{3m+2}{2\sqrt{2}\,n_{m+1}}	
	\right)\\
	\vartheta_5(n,m,h,\gamma(\cdot))
	&=
	2(m+1)
	\left\{ 
	1 + \left[ \frac{a_{m,h}}{16\sqrt{2}(m+1)n_{m+1}} -1\right]\gamma_0
	+
	\left[ 1 - \frac{h}{2(m+1)n_h} \right]\gamma_h	
	\right\}\\
	a_{m,h}
	&=
	(8\sqrt{2}-2)m^2
	+
	[ 12\sqrt{2}-4 + (8\sqrt{2}-2)h ]m\\
	&+
	4\sqrt{2}h^2
	+
	(12\sqrt{2}-2)h
	+
	4\sqrt{2}-2\\
	\vartheta_6(n,m,h,\gamma(\cdot))
	&=
	\frac{b_{m,h}}{2\sqrt{2}(m+1)n_{m+1}}-1\\
	b_{m,h}
	&=
	(2\sqrt{2}-2)m^2
	+
	(2\sqrt{2}-4)m
	+
	2h^2
	+
	(6+2\sqrt{2})mh
	+
	(2\sqrt{2}-1)h
	+
	\sqrt{2}.
\end{align*}
\end{theorem}
\begin{remark}
Note, first, that the main term of the expansion has the rate of $O\left(\frac{1}{n}\right)$ if $m$ is viewed is finite and consists of the first term and part of the third term in the sum \eqref{eq.VAR.GAMMAh}. Taking into account higher order terms,  the rate of convergence of this variance estimator depends, again, on the modulus of continuity of the smooth function $f$ $\omega(\cdot)$ and quantities $J_{K}$, $J_{K}^{||}$, and $H_{K}^{||}$. If $m$ is viewed as infinite it is necessary, yet again, require that $m=o(n)$. Depending on the rates of growth of $J_{K}$, $J_{K}^{||}$, and $H_{K}^{||}$ additional assumptions may have to be imposed on $m$.  
\end{remark}
\end{subsection}

%
\section{Simulations}~\label{sec.sim}

This section contains two simulation studies and in each one of them 
we will employ the autocovariance estimators \eqref{eq.gamma0.hat}-\eqref{eq.gamma.h.hat}
with $d_0=1/\sqrt{2}$, $d_1=-1/\sqrt{2}$.
The first simulation study assesses the performance of these autocovariance estimators for 
a semiparametric change point model with $m$-dependent errors as defined in \eqref{eq.partialModel}-\eqref{piecewise}.
The second study considers the performance of our estimators in the case where the error 
structure is not exactly $m$-dependent but is described by an autoregressive process of order one (AR(1)) with a sufficiently small absolute value of the coefficient. 
This performance is then compared with the performance of the autocovariance structure estimator of \cite{Hall.VanKeilegom.03} and the estimator of \cite{Chakar.etal.16}. The estimator of \cite{Hall.VanKeilegom.03} has been designed 
specifically to handle the case of autoregressive errors in a nonparametric regression model
with a smooth regression signal. The estimator of \cite{Chakar.etal.16} has been designed as a robust estimator of the autoregressive parameter of an AR(1) error process in a model whose mean has a finite number of breakpoints. 
{We utilized the functions of the R package \href{https://cran.r-project.org/web/packages/dbacf/index.html}{dbacf}, available on CRAN, to perform the calculations of this section.}
 
\subsection{Autocovariance structure estimation in a model with $m$-dependent errors}
We perform the first study with two possible stationary distributions of the error process: first, when this stationary distribution is zero mean Gaussian, and second, when that distribution is also zero mean but a non-Gaussian one. The second case is considered in order to assess the robustness of the proposed method against non-normally distributed errors. As an example of a non-Gaussian zero mean error distribution, we choose $t_{4}$, a Student distribution with $4$ degrees of freedom.  

Similarly to \cite{Park.etal.06}, we consider a $1$-dependent error model: $\eps_{i}=r_{0}\delta_{i}+r_{1}\delta_{i-1}$ where $\delta_{i}'$s are i.i.d and distributed either normally or $t_{4}$. It is assumed that $r_{0}=(\sqrt{1+2\nu}+\sqrt{1-2\nu})/2$ and $r_{1}=(\sqrt{1+2\nu}-\sqrt{1-2\nu})/2$ for a parameter $-\frac{1}{2}\le\nu\le \frac{1}{2}.$ Our aim is to estimate the variance and the autocovariance at lags 1 and 2, that is, 
$\gamma_0$, $\gamma_1$ and $\gamma_2$, respectively, 
of the error process $\{\eps_i\}$. The true values of these are clearly $\gamma_0=1,$ $\gamma_1=\nu$ and $\gamma_2=0$.

The simulated signal in this study is a sum of the piecewise constant signal used earlier by 
\cite{Chakar.etal.16} and a smooth function $f(x).$  Briefly, the first additive component is defined as a piecewise constant function $g,$ with six change-points located at fractions $\frac{1}{6}\pm\frac{1}{36},$ $\frac{3}{36}\pm \frac{2}{36},$ and $\frac{5}{6}\pm \frac{3}{36}$ of the sample size $n$. In the first segment, $g=0,$ in the second $g=10,$ and in the remaining segments $g$ alternates between $0$ and $1,$ starting with $g=0$ in the third segment. 
The function $f$ is chosen to ensure that $\int_{0}^{1}f(x)\,dx=0$ in order to satisfy the identifiability constraint. We consider three choices of $f$: a linear function $f_1(x)=1-2x,$ a quadratic function $f_2(x)=4(x-0.5)^{2}-1/3,$ and a periodic function $f_3(x)=\sin(16\pi x).$ All of these functions are defined as zero outside $[0,1]$ interval. The functions are chosen to range from a simple linear function to a periodic function that may potentially increase the influence of the higher order terms in the risk expansions. Tables \ref{tab.mse_plinMethods_ParksErrors1} and \ref{tab.mse_plinMethods_ParksErrors2} summarize the results for $n=1600$ observations obtained from $500$ replications for Gaussian and $t_4$ errors, respectively. The results seem to confirm that, in both cases, MSEs of proposed estimators are rather small and scarcely depend on the choice of the smooth function $f.$ To check consistency of the method, we also performed the same experiment in the case of normal errors using a larger sample size of $n=3000$ observations. The results of this experiment, available in the Table \ref{tab.mse_plinMethods_ParksErrors1i}, show that MSEs decrease for every choice of the smooth function $f$ and for all possible choices of $\nu$ compared to the case of $n=1600.$ This result seems to confirm our conclusion that the main terms in both the squared bias and the variance of proposed estimators do not depend on the choice of $f.$
Finally, it is known that the bias, although negligible asymptotically, may be rather noticeable in smaller sample sizes. This effect has been extensively illustrated in case of homoscedastic nonparametric regression with i.i.d.~data in \cite{Dette.etal.98}. To study the bias magnitude numerically in our case, we also consider two smaller sample sizes $n=500$ and $n=1000$. For brevity, we only consider the situation where the error distribution is a zero mean Gaussian one. The results are illustrated in the Tables~\ref{tab.mse_plinMethods_ParksErrors_1iii} and \ref{tab.mse_plinMethods_ParksErrors1ii}, respectively. First of all, one can see clearly that absolute magnitudes of MSEs in this case are noticeably larger than for larger sample sizes; however, as the sample size increases from $n=500$ to $n=1000$ the magnitude of MSEs goes down noticeably, by a factor of about $4$ in many cases. The dependence on the choice of the function $f$ becomes a little more pronounced for a small sample size of $n=500,$ especially when changing from the choice of $f_2$ to that of $f_3$.

\begin{table}[h]
\caption{The MSE of autocovariance estimators of $\gamma_0=1$, $\gamma_1=\nu$ 
and $\gamma_2=0$ under the $1$-dependent error model where $\delta_{i}$'s are i.i.d. $\mathcal{N}({0},1)$ based on $500$ replications 
of size $1600.$~\label{tab.mse_plinMethods_ParksErrors1}}
\centering
\scalebox{0.45}{
\begin{tabular}{*{22}{c}}

\toprule[1.25pt]
 & \multicolumn{3}{c}{$\nu=-0.5$} & \multicolumn{3}{c}{$\nu=-0.4$} & \multicolumn{3}{c}{$\nu=-0.2$} 
 & \multicolumn{3}{c}{$\nu=0$} & \multicolumn{3}{c}{$\nu=0.2$} & \multicolumn{3}{c}{$\nu=0.4$}
 & \multicolumn{3}{c}{$\nu=0.5$} \\
 \cmidrule[1.25pt]{2-22}
 
 & $\gamma_0$ & $\gamma_1$ & $\gamma_2$ &$\gamma_0$ &  $\gamma_1$ & $\gamma_2$ &$\gamma_0$ &  $\gamma_1$ & $\gamma_2$ & $\gamma_0$ & $\gamma_1$ & $\gamma_2$ & $\gamma_0$
 & $\gamma_1$ & $\gamma_2$ & $\gamma_0$ & $\gamma_1$ & $\gamma_2$ & $\gamma_0$ & $\gamma_1$ & $\gamma_2$ \\
 
 \cmidrule[1.25pt]{2-22}
\lf{$f_1$} & \lf{0.0402} & \lf{0.0383} & \lf{0.0181} & \lf{ 0.0394} & \lf{0.0379} & \lf{0.0180} & \lf{0.0393} 
       & \lf{ 0.0380} & \lf{ 0.0181} & \lf{0.0375} & \lf{ 0.0387} & \lf{0.0185} & \lf{0.0383} & \lf{0.0378}& \lf{0.0181} & \lf{0.0407} & \lf{0.0396} &\lf{0.0182} & \lf{0.0418} & \lf{0.0423} & \lf{0.0200}\\

 \lf{$f_2$} & \lf{0.0390} & \lf{0.0394} & \lf{0.0174} & \lf{0.0397} & \lf{0.0388} & \lf{0.0181} & \lf{0.0405} 
       & \lf{0.0379} & \lf{0.0182} & \lf{0.0404} & \lf{0.0381} & \lf{0.0176} & \lf{0.0406} & \lf{0.0389}& \lf{0.0180}& \lf{0.0405} & \lf{0.0398}& \lf{0.0186} &\lf{0.0408} &\lf{0.0394} &\lf{0.0183}\\

 \lf{$f_3$} & \lf{0.0420} & \lf{0.0424} & \lf{0.0193} & \lf{0.0396} & \lf{0.0441} & \lf{0.0206} & \lf{0.0429} 
            & \lf{0.0421} & \lf{0.0197} & \lf{0.0416} & \lf{0.0422} & \lf{0.0199} & \lf{0.0418} & \lf{0.0432} & \lf{0.0203} & \lf{0.0414} & \lf{ 0.0432} & \lf{0.0208} & \lf{0.0417} &             \lf{0.0427} & \lf{0.0201} \\
 \bottomrule[1.25pt]  
\end{tabular}
}
\end{table}

\begin{table}[h]
\caption{The MSE of autocovariance estimators of $\gamma_0=1$, $\gamma_1=\nu$ 
and $\gamma_2=0$ under the $1$-dependent error model where $\delta_{i}$'s are i.i.d. $t_{4}$ based on $500$ replications 
of size $1600.$ \label{tab.mse_plinMethods_ParksErrors2}}
\centering
\scalebox{0.45}{
\begin{tabular}{*{22}{c}}

\toprule[1.25pt]
 & \multicolumn{3}{c}{$\nu=-0.5$} & \multicolumn{3}{c}{$\nu=-0.4$} & \multicolumn{3}{c}{$\nu=-0.2$} 
 & \multicolumn{3}{c}{$\nu=0$} & \multicolumn{3}{c}{$\nu=0.2$} & \multicolumn{3}{c}{$\nu=0.4$}
 & \multicolumn{3}{c}{$\nu=0.5$} \\
 \cmidrule[1.25pt]{2-22}
 
 & $\gamma_0$ & $\gamma_1$ & $\gamma_2$ &$\gamma_0$ &  $\gamma_1$ & $\gamma_2$ &$\gamma_0$ &  $\gamma_1$ & $\gamma_2$ & $\gamma_0$ & $\gamma_1$ & $\gamma_2$ & $\gamma_0$
 & $\gamma_1$ & $\gamma_2$ & $\gamma_0$ & $\gamma_1$ & $\gamma_2$ & $\gamma_0$ & $\gamma_1$ & $\gamma_2$ \\
 
 \cmidrule[1.25pt]{2-22}
\lf{$f_1$} & \lf{0.0764} & \lf{0.0538} & \lf{0.0250} & \lf{0.0595} & \lf{0.0514} & \lf{0.0225} & \lf{0.0990} 
       & \lf{0.0457} & \lf{0.0227} & \lf{0.0659} & \lf{0.0422} & \lf{0.0218} & \lf{0.0876} & \lf{0.0459}& \lf{0.0233} & \lf{0.0754} & \lf{0.0518} &\lf{0.0237} & \lf{0.0707} & \lf{0.0467} & \lf{0.0212}\\

 \lf{$f_2$} & \lf{0.0935} & \lf{0.0521} & \lf{0.0225} & \lf{0.1013} & \lf{ 0.0568} & \lf{0.0219} & \lf{0.0868} 
       & \lf{0.0450} & \lf{0.0228} & \lf{0.0679} & \lf{0.0430} & \lf{0.0218} & \lf{0.0677} & \lf{0.0400}& \lf{0.0203}& \lf{0.0801} & \lf{0.0514}& \lf{0.0253} &\lf{0.0786} &\lf{0.0539} &\lf{0.0230}\\

 \lf{$f_3$} & \lf{0.1088} & \lf{0.0636} & \lf{0.0247} & \lf{0.0645} & \lf{0.0528} & \lf{0.0238} & \lf{0.1059} 
            & \lf{0.0448} & \lf{0.0234} & \lf{0.0877} & \lf{0.0446} & \lf{0.0228} & \lf{0.0789} & \lf{0.0491} & \lf{0.0245} & \lf{0.0964} & \lf{0.0555} & \lf{0.0239} & \lf{0.0818} &             \lf{0.0575} & \lf{0.0257} \\
 \bottomrule[1.25pt]  
\end{tabular}
}
\end{table}

\begin{table}[h]
\caption{The MSE of autocovariance estimators of $\gamma_0=1$, $\gamma_1=\nu$ 
and $\gamma_2=0$ under the $1$-dependent error model where $\delta_{i}$'s are i.i.d. $\mathcal{N}({0},1)$ based on 
$500$ replications of size $3000.$\label{tab.mse_plinMethods_ParksErrors1i}}
\centering
\scalebox{0.45}{
\begin{tabular}{*{22}{c}}

\toprule[1.25pt]
 & \multicolumn{3}{c}{$\nu=-0.5$} & \multicolumn{3}{c}{$\nu=-0.4$} & \multicolumn{3}{c}{$\nu=-0.2$} 
 & \multicolumn{3}{c}{$\nu=0$} & \multicolumn{3}{c}{$\nu=0.2$} & \multicolumn{3}{c}{$\nu=0.4$}
 & \multicolumn{3}{c}{$\nu=0.5$} \\
 \cmidrule[1.25pt]{2-22}
 
 & $\gamma_0$ & $\gamma_1$ & $\gamma_2$ &$\gamma_0$ &  $\gamma_1$ & $\gamma_2$ &$\gamma_0$ &  $\gamma_1$ & $\gamma_2$ & $\gamma_0$ & $\gamma_1$ & $\gamma_2$ & $\gamma_0$
 & $\gamma_1$ & $\gamma_2$ & $\gamma_0$ & $\gamma_1$ & $\gamma_2$ & $\gamma_0$ & $\gamma_1$ & $\gamma_2$ \\
 
 \cmidrule[1.25pt]{2-22}
\lf{$f_1$} & \lf{0.0117} & \lf{0.0116} & \lf{0.0055} & \lf{0.0120} & \lf{ 0.0111} & \lf{0.0055} & \lf{0.0120} 
       & \lf{0.0111} & \lf{0.0053} & \lf{0.0112} & \lf{0.0112} & \lf{0.0055} & \lf{0.0116} & \lf{0.0114}& \lf{0.0054} & \lf{0.0113} & \lf{0.0114} &\lf{0.0057} & \lf{0.0119} & \lf{0.0116} & \lf{0.0056}\\

 \lf{$f_2$} & \lf{0.0124} & \lf{0.0116} & \lf{0.0058} & \lf{0.0119} & \lf{0.0114} & \lf{0.0054}  &\lf{0.0120} & \lf{0.0112} & \lf{0.0053} & \lf{0.0115} & \lf{ 0.0114} & \lf{0.0057} & \lf{0.0116} & \lf{0.0112} & \lf{0.0052}& \lf{0.0122}& \lf{0.0116} & \lf{0.0055}& \lf{0.0121} &\lf{0.0115} &\lf{0.0054} \\

 \lf{$f_3$} & \lf{0.0122} & \lf{0.0120} & \lf{0.0055} & \lf{0.0125} & \lf{0.0119} & \lf{0.0059} & \lf{ 0.0119} 
            & \lf{0.0117} & \lf{0.0058} & \lf{0.0121} & \lf{0.0118} & \lf{0.0057} & \lf{0.0123} & \lf{0.0123} & \lf{0.0060} & \lf{ 0.0122} & \lf{ 0.0123} & \lf{0.0060} & \lf{0.0127} &             \lf{0.0124} & \lf{0.0058} \\
 \bottomrule[1.25pt]  
\end{tabular}
}
\end{table}

\begin{table}[h]
\caption{The MSE of autocovariance estimators of $\gamma_{0}=1$, $\gamma_{1}=\nu$ 
and $\gamma_2=0$ under the $1$-dependent error model where $\delta_{i}$'s are i.i.d. $\mathcal{N}({0},1)$ based on $500$ replications 
of size $500.$~\label{tab.mse_plinMethods_ParksErrors_1iii}}
\centering
\scalebox{0.45}{
\begin{tabular}{*{22}{c}}

\toprule[1.25pt]
 & \multicolumn{3}{c}{$\nu=-0.5$} & \multicolumn{3}{c}{$\nu=-0.4$} & \multicolumn{3}{c}{$\nu=-0.2$} 
 & \multicolumn{3}{c}{$\nu=0$} & \multicolumn{3}{c}{$\nu=0.2$} & \multicolumn{3}{c}{$\nu=0.4$}
 & \multicolumn{3}{c}{$\nu=0.5$} \\
 \cmidrule[1.25pt]{2-22}
 
 & $\gamma_0$ & $\gamma_1$ & $\gamma_2$ &$\gamma_0$ &  $\gamma_1$ & $\gamma_2$ &$\gamma_0$ &  $\gamma_1$ & $\gamma_2$ & $\gamma_0$ & $\gamma_1$ & $\gamma_2$ & $\gamma_0$
 & $\gamma_1$ & $\gamma_2$ & $\gamma_0$ & $\gamma_1$ & $\gamma_2$ & $\gamma_0$ & $\gamma_1$ & $\gamma_2$ \\
 
 \cmidrule[1.25pt]{2-22}
\lf{$f_1$} & \lf{0.3974} & \lf{0.3901} & \lf{0.1764} & \lf{0.3843} & \lf{0.3876} & \lf{0.1729} & \lf{ 0.3859} 
       & \lf{0.3986} & \lf{0.1758} & \lf{0.3867} & \lf{0.3880} & \lf{0.1743} & \lf{0.4003} & \lf{ 0.3976}& \lf{0.1778} & \lf{0.3962} & \lf{0.3940} &\lf{0.1768} & \lf{0.4035} & \lf{0.4062} & \lf{0.1848}\\

 \lf{$f_2$} & \lf{0.3970} & \lf{0.3895} & \lf{0.1760} & \lf{0.3861} & \lf{0.3957} & \lf{ 0.1807}  &\lf{0.3918} & \lf{0.3908} & \lf{0.1823} & \lf{0.3907} & \lf{0.3930 } & \lf{0.1781} & \lf{0.3938} & \lf{0.3990} & \lf{0.1838}& \lf{ 0.3993}& \lf{0.3954} & \lf{0.1763}& \lf{0.3910} &\lf{0.3838} &\lf{0.1709} \\

 \lf{$f_3$} & \lf{0.4587} & \lf{0.5176} & \lf{0.2441} & \lf{0.4648} & \lf{0.5125} & \lf{0.2466} & \lf{0.4681} 
            & \lf{0.5192} & \lf{0.2461} & \lf{0.4627} & \lf{0.5206} & \lf{0.2475} & \lf{0.4666} & \lf{0.5312} & \lf{0.2511} & \lf{0.4771} & \lf{ 0.5159} & \lf{0.2401} & \lf{0.4629} &             \lf{0.5205} & \lf{0.2493} \\
 \bottomrule[1.25pt]  
\end{tabular}
}
\end{table}

\begin{table}[h]
\caption{The MSE of autocovariance estimators of $\gamma_{0}=1$, $\gamma_{1}=\nu$ 
and $\gamma_2=0$ under the $1$-dependent error model where $\delta_{i}$'s are i.i.d. $\mathcal{N}({0},1)$ based on 
$500$ replications of size $1000.$
\label{tab.mse_plinMethods_ParksErrors1ii}}
\centering
\scalebox{0.45}{
\begin{tabular}{*{22}{c}}

\toprule[1.25pt]
 & \multicolumn{3}{c}{$\nu=-0.5$} & \multicolumn{3}{c}{$\nu=-0.4$} & \multicolumn{3}{c}{$\nu=-0.2$} 
 & \multicolumn{3}{c}{$\nu=0$} & \multicolumn{3}{c}{$\nu=0.2$} & \multicolumn{3}{c}{$\nu=0.4$}
 & \multicolumn{3}{c}{$\nu=0.5$} \\
 \cmidrule[1.25pt]{2-22}
 
 & $\gamma_0$ & $\gamma_1$ & $\gamma_2$ &$\gamma_0$ &  $\gamma_1$ & $\gamma_2$ &$\gamma_0$ &  $\gamma_1$ & $\gamma_2$ & $\gamma_0$ & $\gamma_1$ & $\gamma_2$ & $\gamma_0$
 & $\gamma_1$ & $\gamma_2$ & $\gamma_0$ & $\gamma_1$ & $\gamma_2$ & $\gamma_0$ & $\gamma_1$ & $\gamma_2$ \\
 
 \cmidrule[1.25pt]{2-22}
\lf{$f_1$} & \lf{0.0997} & \lf{ 0.0966} & \lf{0.0452} & \lf{0.0994} & \lf{0.0979} & \lf{0.0450} & \lf{0.0971} 
       & \lf{0.0994} & \lf{0.0451} & \lf{0.0980} & \lf{0.0990} & \lf{0.0457} & \lf{0.0975} & \lf{0.0965}& \lf{0.0449} & \lf{0.1012} & \lf{0.1002} &\lf{0.0454} & \lf{0.1016} & \lf{0.1025} & \lf{0.0477}\\

 \lf{$f_2$} & \lf{0.0948} & \lf{0.1017} & \lf{0.0456} & \lf{ 0.0978} & \lf{0.0988} & \lf{0.0456}  &\lf{0.0957} & \lf{0.0987} & \lf{0.0441} & \lf{0.1000} & \lf{0.0988} & \lf{0.0433} & \lf{0.1034} & \lf{0.1003} & \lf{0.0454}& \lf{0.1016}& \lf{0.1005} & \lf{0.0461}& \lf{0.1018} &\lf{0.1027} &\lf{0.0477} \\

 \lf{$f_3$} & \lf{0.1050} & \lf{0.1142} & \lf{0.0535} & \lf{ 0.1101} & \lf{0.1118} & \lf{0.0541} & \lf{0.1065} 
            & \lf{0.1113} & \lf{0.0538} & \lf{0.1075} & \lf{0.1151} & \lf{0.0518} & \lf{0.1073} & \lf{0.1139} & \lf{0.0538} & \lf{0.1084} & \lf{0.1144} & \lf{0.0536} & \lf{0.1090} &             \lf{0.1153} & \lf{0.0544} \\
 \bottomrule[1.25pt]  
\end{tabular}
}
\end{table}

\subsection{Autocovariance estimation in a model with AR($1$) errors}
We also conducted an additional experiment with the aim of comparing our method to a possible competitor. In particular, we considered a situation where the errors $\eps_{i}$ are generated by an AR($1$) process $\eps_{i}=\phi\eps_{i-1}+\zeta_{i}$ where $\zeta_{i}\sim \mathcal{N}({0},1)$ and independent. The method proposed in our manuscript is designed for the case where model errors are generated by an $m$-dependent process with a finite $m;$ however, for a causal AR(1) process with $0<\phi<1$ the autocovariance at lag $k$ is proportional to $\phi^{|k|},$ thus decreasing at an exponential rate. Due to this, we hypothesize that our method may perform reasonably well if the errors are generated by an AR($1$) process with $\phi$ that is not too close to $1$ in absolute value. More specifically, we selected values of $\phi$ ranging from $0.1$ to $0.5$ with a step of $0.10.$ As before, we compare the performance of our method when estimating variance and
autocovariance at lags 1 and 2 of the process $\{\eps_{i}\}$ to that of 
\cite{Hall.VanKeilegom.03} and \cite{Chakar.etal.16}.
The method of \cite{Chakar.etal.16} assumes a mean function with a finite number of breakpoints and was designed to estimate the coefficient $\phi$ only and it does not provide a direct estimate of the autocovariance. Therefore, we proceed as follows. Let $\hat\phi$ be the robust estimator of $\phi$ obtained using the method of \cite{Chakar.etal.16}. Then, we compute the estimated variance and autocovariance at lags $1$ and $2$ for this method as $\frac{1}{1-\hat\phi^2},$ $\frac{\hat\phi}{1-\hat\phi^2}$, and $\frac{\hat\phi^{2}}{1-\hat\phi^2}$, respectively.  

In this experiment, the three choices of the smooth function $f(x)$ remain the same as in the previous one. When applying our estimator to an AR(1) error process, we choose the value $m=2.$ The method of \cite{Hall.VanKeilegom.03} method is used with the choices of smoothing parameters recommended in Section $3$ of their paper: the first parameter $m_{1}=n^{0.4}$ and the second parameter $m_{2}=\sqrt{n}.$ {We used the method of \cite{Chakar.etal.16}  
with the default arguments of the R function \texttt{AR1seg\_func} from the package \texttt{AR1seq} version 1.0.}
The results of the method comparison is given in Tables 
\ref{tab.mse_plinMethods_AR1Errors}-\ref{tab.mse_HallvanKeilegomMethods_AR1Errors}-\ref{tab.mse_Chakar_AR1Errors}. 

Comparing the performance of our approach to that of \cite{Hall.VanKeilegom.03} we note that, perhaps unsurprisingly, our method provides a universally better performance. In most cases, the MSEs of all estimators for our method are smaller by an order of magnitude compared to that of the method of \cite{Hall.VanKeilegom.03}. That includes all three choices of the smooth function $f(x)$ and all possible choices of the autoregressive coefficient $\phi.$ This suggests that the method of \cite{Hall.VanKeilegom.03} is not at all resistant to the presence of breakpoints in the mean function of the regression model. Comparing the MSEs shown in Table \ref{tab.mse_plinMethods_AR1Errors} to those exhibited by our estimator under the assumption of $m$-dependency for the same sample size, it appears that our method can also handle moderate departures from the assumption of $m$-dependency in the error process of the model rather well. 

\begin{table}[h]
\caption{The MSE of autocovariance estimators at lags $0$, $1$ and $2$ obtained using our approach 
under the AR(1)-dependent error model where the coefficient $\phi$ ranges between $0.1$ and $0.5.$ The experiment is based on $500$ replications of size $1600.$\label{tab.mse_plinMethods_AR1Errors}}
\centering
\scalebox{0.45}{
\begin{tabular}{*{16}{c}}
\toprule[1.25pt]

 & \multicolumn{3}{c}{$\phi=0.10$} & \multicolumn{3}{c}{$\phi=0.20$} & \multicolumn{3}{c}{$\phi=0.30$} & \multicolumn{3}{c}{$\phi=0.40$} & \multicolumn{3}{c}{$\phi=0.50$} \\
 \cmidrule[1.25pt]{2-16}
 
 & $\gamma_0$ & $\gamma_1$ & $\gamma_2$ & $\gamma_0$ & $\gamma_1$ & $\gamma_2$ & $\gamma_0$ & $\gamma_1$ & $\gamma_2$ & $\gamma_0$ & $\gamma_1$ & $\gamma_2$ & $\gamma_0$ & $\gamma_1$ & $\gamma_2$  \\
 
 \cmidrule[1.25pt]{2-16}
 \LF{$f_1$} & \LF{0.0379} & \LF{0.0377} & \LF{0.0180} & \LF{0.0346} & \LF{0.0376} & \LF{0.0177} 
            & \LF{0.0289} & \LF{0.0356} & \LF{0.0159} & \LF{0.0178} & \LF{0.0307} & \LF{0.0127} & \LF{0.0046} & \LF{0.0158} & \LF{0.0044} \\

 \LF{$f_2$} & \LF{0.0385} & \LF{0.0375} & \LF{0.0174} & \LF{0.0367} & \LF{0.0385} & \LF{0.0178} 
            & \LF{0.0288} & \LF{0.0339} & \LF{0.0150} & \LF{0.0168} & \LF{0.0298} & \LF{0.0122} & \LF{0.0040} & \LF{0.0156} & \LF{0.0045}  \\
 
 \LF{$f_3$} & \LF{0.0402} & \LF{0.0429} & \LF{0.0204} & \LF{0.0376} & \LF{0.0413} & \LF{0.0193}           & \LF{0.0304} & \LF{0.0388} & \LF{0.0179} & \LF{0.0181} & \LF{ 0.0317} & \LF{0.0132} & \LF{0.0051} & \LF{0.0185} & \LF{0.0052} \\
 \bottomrule[1.25pt]  
\end{tabular}
}
\end{table}

\begin{table}[h]
\caption{The MSE of autocovariance estimators at lags $0$, $1$ and $2$ obtained using the approach of 
\cite{Hall.VanKeilegom.03} under the AR(1)-dependent error model where the coefficient $\phi$ ranges between $0.1$ and $0.5.$ The experiment is based on $500$ replications of size $1600.$\label{tab.mse_HallvanKeilegomMethods_AR1Errors}}
\centering
\scalebox{0.45}{
\begin{tabular}{*{16}{c}}

\toprule[1.25pt]

 & \multicolumn{3}{c}{$\phi=0.10$} & \multicolumn{3}{c}{$\phi=0.20$} & \multicolumn{3}{c}{$\phi=0.30$} 
 & \multicolumn{3}{c}{$\phi=0.40$} & \multicolumn{3}{c}{$\phi=0.50$} \\
 \cmidrule[1.25pt]{2-16}
 
 & $\gamma_0$ & $\gamma_1$ & $\gamma_2$ & $\gamma_0$ & $\gamma_1$ & $\gamma_2$ & $\gamma_0$ & $\gamma_1$
 & $\gamma_2$ & $\gamma_0$ & $\gamma_1$ & $\gamma_2$ & $\gamma_0$ & $\gamma_1$ & $\gamma_2$   \\
 
 \cmidrule[1.25pt]{2-16}
 \LF{$f_1$} & \LF{3.1440} & \LF{ 2.9200} & \LF{2.7421} & \LF{3.1255} & \LF{2.9103} & \LF{2.8207} 
            & \LF{3.1574} & \LF{2.9350} & \LF{3.0245} & \LF{3.1482} & \LF{2.9223} & \LF{3.3461} & \LF{3.0916} & \LF{2.8666} & \LF{3.9038} \\

 \LF{$f_2$} & \LF{3.1411} & \LF{2.9141} & \LF{2.7291} & \LF{3.1225} & \LF{2.9018} & \LF{2.8076} 
            & \LF{ 3.1152} & \LF{2.8872} & \LF{2.9750} & \LF{3.1342} & \LF{2.9077} & \LF{3.3323} & \LF{3.0804} & \LF{2.8685} & \LF{3.9204}   \\
 
 \LF{$f_3$} & \LF{5.0621} & \LF{ 4.7845} & \LF{4.5481} & \LF{5.0719} & \LF{4.7850} & \LF{4.6989} 
            & \LF{5.0394} & \LF{4.7565} & \LF{4.9638} & \LF{5.0202} & \LF{4.7317} & \LF{5.4616} & \LF{4.9952} & \LF{4.7178} & \LF{6.3817}  \\
 \bottomrule[1.25pt]  
\end{tabular}
}
\end{table}

At the same time, it appears that our method can, at least partially, holds its own against {the method of \cite{Chakar.etal.16} that, like ours, has been devised to account for a possibility of a finite number of breaks in the mean}. This method exhibits better performance than our method for relatively small values of $\phi$ such as $\phi=0.1$ and $\phi=0.2$. This better performance is observed for all choices of the function $f$. Its performance deteriorates significantly, however, for larger values of $\phi$. As a matter of fact, some of the MSEs of estimators obtained by this method for $\phi=0.5$ are an order or two orders of magnitude larger than those of our estimators under the same circumstances. This suggests that, in some cases, our approach can handle autocovariance estimation when the mean has a number of breakpoints better than the robust method of \cite{Chakar.etal.16}. Moreover, as opposed to our ACF estimation method, Chakar's is very slow. 
For example, the results of the Table \eqref{tab.mse_Chakar_AR1Errors} were obtained using $534.89$ sec while those in Table ~\ref{tab.mse_plinMethods_ParksErrors2} took only about $1.80$ sec to complete. All calculations were performed on a Dell Latitude $5330$ laptop with $16$ GB RAM and a $12$th Gen Intel(R) Core(TM) i$5-1235$U  $1.30$ GHz processor.
The reason for having such a long execution time in the estimation of the parameter $\phi$
is that this estimate is returned only when the method has estimated the entire change point
model, which is known to be time-consuming. {The user interested in Chakar's robust estimation method, can reduce the execution time considerably by using the function \texttt{dbacf\_AR1} of the
\texttt{R} package \texttt{dbacf}.}

\begin{table}[h]
\caption{The MSE of autocovariance estimators at lags $0$, $1$ and $2$ obtained using the approach of 
\cite{Chakar.etal.16} under the AR(1)-dependent error model where the coefficient $\phi$ ranges 
between $0.1$ and $0.5.$ The experiment is based on $500$ replications of size $1600.$\label{tab.mse_Chakar_AR1Errors}}
\centering
\scalebox{0.45}{
\begin{tabular}{*{16}{c}}

\toprule[1.25pt]

 & \multicolumn{3}{c}{$\phi=0.10$} & \multicolumn{3}{c}{$\phi=0.20$} & \multicolumn{3}{c}{$\phi=0.30$} 
 & \multicolumn{3}{c}{$\phi=0.40$} & \multicolumn{3}{c}{$\phi=0.50$} \\
 \cmidrule[1.25pt]{2-16}
 
 & $\gamma_0$ & $\gamma_1$ & $\gamma_2$ & $\gamma_0$ & $\gamma_1$ & $\gamma_2$ & $\gamma_0$ & $\gamma_1$
 & $\gamma_2$ & $\gamma_0$ & $\gamma_1$ & $\gamma_2$ & $\gamma_0$ & $\gamma_1$ & $\gamma_2$   \\
 
 \cmidrule[1.25pt]{2-16}
 \LF{$f_1$} & \LF{0.0005} & \LF{0.0065} & \LF{0.0005} & \LF{0.0023} & \LF{0.0179} & \LF{ 0.0023} 
            & \LF{0.0078} & \LF{0.0888} & \LF{0.0078} & \LF{0.0319} & \LF{0.3981} & \LF{0.0319} & \LF{0.0713} & \LF{1.6504} & \LF{0.0713} \\

 \LF{$f_2$} & \LF{0.0006} & \LF{0.0076} & \LF{0.0006} & \LF{0.0018} & \LF{0.0187} & \LF{0.0018} 
            & \LF{0.0053} & \LF{0.0819} & \LF{0.0053} & \LF{0.0302} & \LF{0.3806} & \LF{0.0302} & \LF{0.1388} & \LF{1.6557} & \LF{0.1388}   \\
 
 \LF{$f_3$} & \LF{0.0006} & \LF{0.0079} & \LF{0.0006} & \LF{0.0021} & \LF{0.0182} & \LF{0.0021} 
            & \LF{0.0086} & \LF{0.0833} & \LF{0.0086} & \LF{0.0223} & \LF{0.3819} & \LF{0.0223} & \LF{0.0811} & \LF{1.6745} & \LF{0.0811}  \\
 \bottomrule[1.25pt]  
\end{tabular}
}
\end{table}

Finally, note that the choice of the estimator gap $m,$ when applying our estimator to the model with AR($1$) error process, is somewhat arbitrary. Our choice of the order $2$ was based on the fact that, for any difference sequence order $\ell,$ the bias of the resulting estimator is proportional to $\ell\,(m+1)$ which may play a fairly substantial role when the number of observations $n$ is not too large. To illustrate the benefits of this assumption, we also applied our method to the same model with an AR($1$) error process, the same range of values of $\phi$ and the choice of the smooth function $f_1(x)=1-2x$ for any $x\in [0,1],$ while choosing $m=3.$ The results are given in the Table \ref{tab.mse_plinMethods_AR1Errors_m3}. Note that the resulting mean squared errors are mostly larger than those obtained the choice of $m=2$ although by less than the full order of magnitude. Note, however, that even in this misspecification case our estimator outperforms \cite{Hall.VanKeilegom.03}'s uniformly and \cite{Chakar.etal.16} in a number of cases (especially for larger values of the coefficient $\phi$) when the sample size is the same. 

\begin{table}[h]
\caption{The MSE of autocovariance estimators at lags $0$, $1$ and $2$ obtained using our approach under the AR(1)-dependent error model where the coefficient $\phi$ ranges between $0.1$ and $0.5,$ $m=3$ and $f=f_1.$ The experiment is based on $500$ replications of size $1600.$\label{tab.mse_plinMethods_AR1Errors_m3}}
\centering
\scalebox{0.45}{
\begin{tabular}{*{16}{c}}
\toprule[1.25pt]

 &\multicolumn{3}{c}{$\phi=0.10$} & \multicolumn{3}{c}{$\phi=0.20$} & \multicolumn{3}{c}{$\phi=0.30$} & \multicolumn{3}{c}{$\phi=0.40$} & \multicolumn{3}{c}{$\phi=0.50$}\\
 
 & $\gamma_0$ & $\gamma_1$ & $\gamma_2$ & $\gamma_0$ & $\gamma_1$ & $\gamma_2$ & $\gamma_0$ & $\gamma_1$ & $\gamma_2$ & $\gamma_0$ & $\gamma_1$ & $\gamma_2$ & $\gamma_0$ & $\gamma_1$ & $\gamma_2$ \\
\cmidrule[1.25pt]{2-16}
 \LF{$f_1$} & \LF{0.0673} & \LF{0.0662} & \LF{ 0.0375} & \LF{0.0666} & \LF{0.0674} & \LF{0.0381} 
            & \LF{0.0673} & \LF{0.0675} & \LF{0.0380} & \LF{0.0563} & \LF{0.0654}  & \LF{0.0370} & \LF{0.0361} & \LF{0.0528} & \LF{0.0273} \\

 \bottomrule[1.25pt]  
\end{tabular}
}
\end{table}


%
\section{Application}~\label{sec.apps}
In this section we consider the dataset \texttt{gtemp\_land}
from the R package \texttt{astsa} by \cite{Stoffer.Poison.23}.
This dataset contains observations of annual temperature anomalies 
(in Celsius degree) averaged over the Earth's land area from
1880 to 2021. We present a brief analysis of this dataset autocovariance
structure.

 \begin{figure}[htb]
\centering
    \includegraphics[width=\linewidth]{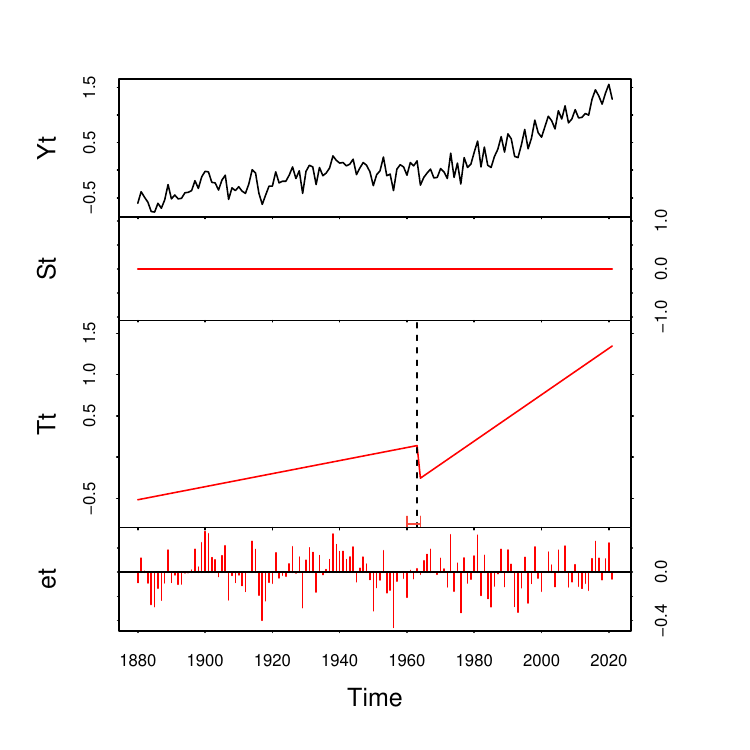} 
  \caption{\footnotesize{Change point analysis of \texttt{gtemp\_land} dataset.}}
  \label{fig:app} 
\end{figure}

Figure \ref{fig:app} shows the \texttt{gtemp\_land} (top panel) with a $95\%$ statistically
significant breakpoint (at 1963) in its estimated trend (third panel from top to bottom);
the second and fourth panels (from top to bottom) show the estimated seasonal
component and the residuals, respectively. This breakpoint was estimated using the R package \texttt{bfast}
by \cite{Verbesselt.10}, with no seasonal argument and using default
values for the remaining arguments of the procedure.

\cite{Verbesselt.10}'s allows for the estimation of unknown change points
in the trend and seasonal component of time series. Statistically,
this method is based on \cite{Chu.etal.1995}'s MOSUM test for no changes 
in the parameters of a sequence of local linear regressions, see also \cite{Zeileis.etal.2002} for further extensions to the method and its implementation.
Computationally, \texttt{bfast} is an iterative algorithm allowing for 
fast computations due to the use of dynamic programming to keep track 
of both the number and positions of the estimated change points. 
The total number of change points, unknown a priori, is estimated through 
optimizing the BIC.

Due to the above, we believe that it is pertinent to analyze
these observations with our approach. Apparently, \cite{Hall.VanKeilegom.03}
analyzed this dataset with observations until 1985 and
assumed AR(1) and AR(2) error processes.
Like us, these authors did not consider a seasonal component. 
We did not consider a seasonal component because these are annually collected observations with no evident
bi-, five-, etc., annual structure. In addition, a smooth seasonal component 
can be construed as a part of our smooth function $f(\cdot)$.
Table \ref{tab:app} shows \cite{Hall.VanKeilegom.03}'s ACF estimators as 
well as the bias-reducing, second-order, $(m+1)$-gapped, autocovariance 
estimator of \cite{Tecuapetla.Munk.2017}, and our estimator. For the latter 
two estimators, we assumed that $m=1,2,3,4$.

\begin{table}[ht]
\centering
\caption{\footnotesize ACF estimation for the annual average temperature anomalies of Earth's land area from 1880 to 2021.}~
\label{tab:app}
\fontsize{7}{9}\selectfont
\begin{tabular}[t]{c|c|cccccccccccc|c}
\toprule
\hline
&&
\multicolumn{2}{c}{$\gamma_{HV}$} 
&&
\multicolumn{4}{c}{$\gamma_{TM}$}
&&
\multicolumn{4}{c|}{$\gamma_{plin}$} &\\
\hline
&& $AR(1)$ & $AR(2)$ && $m=1$ & $m=2$ & $m=3$ & $m=4$ && $m=1$ & $m=2$ & $m=3$ & $m=4$ &\\
\cline{3-4}  \cline{6-9}  \cline{11-14}
&$\wh{\gamma}_0$ & 0.035 & 0.035 
		   && 0.025 & 0.028 & 0.023 & 0.027 
		   && 0.025 & 0.029 & 0.026 & 0.031 &\\
&$\wh{\gamma}_1$ & 0.014 & 0.014 
		   && 0.004 & 0.007 & 0.002 & 0.006 
		   && 0.008 & 0.005 & 0.010 & 0.011 &\\
&$\wh{\gamma}_2$ & & 0.010 
		   &&  & 0.003 & -0.002 & 0.002 
		   &&  & 0.001 & 0.006 & 0.007 &\\
&$\wh{\gamma}_3$ & &  
		   &&  &  & -0.005 & -0.002 
		   &&  &  & 0.002 & 0.004 &\\
&$\wh{\gamma}_4$ & &  
		   &&  &  &  & 0.003 
		   &&  &  &  & 0.007 &\\
\hline
\bottomrule
\end{tabular}
\vspace{0.25cm}
\caption*{\footnotesize Notation:  ${\gamma}_{HV}$, ${\gamma}_{TM}$, 
${\gamma}_{plin}$, \cite{Hall.VanKeilegom.03}'s, \cite{Tecuapetla.Munk.2017}'s, 
and this paper ACF estimates, respectively.}
\end{table}

Although all estimates for $\gamma_0$ are of the same order of magnitude,
\cite{Hall.VanKeilegom.03}'s is slightly larger than the estimates obtained 
with the other two methods; the same can be said for the estimates of $\gamma_1$.
This might be attributed to the fact that Hall and Van Keilegom's method
does not take into account the apparent breakpoint within the time series. 
It seems that independently of the value of $m$, the bias-reducing,
difference-based estimators shown in this section provide similar results. 


%
%
%

\section*{Supplementary Materials for ``Autocovariance estimation via difference
schemes for a semiparametric change point model with $m$-dependent errors'' by
Michael Levine and Inder Tecuapetla-G\'omez}

\appendix

\section[Appendix A\hfill]{Proof of Theorems~\ref{theo.DBE-partial}, \ref{eq.root-n}, \ref{th_var}, \ref{theo_var_GAMMAh}}~\label{sec.supp}
In this section we will employ the following notation.
For $x\in \R$, $\lfloor x\rfloor = \max\{y\in \Z\mid y\leq x\}$. 
For $0\leq i\leq K$, $t_i=\lfloor n\,\tau_i\rfloor$, where $\tau_i$ is a change point.
For $a,b\in \R$, $a \wedge b = \min (a,b)$.
For the subset $\mc A$, $\ind_{\mc A}$ denotes the indicator function on $\mc A$.
We will write $n_{\ell_{m+1}}$ to denote $n-\ell(m+1)$ and $n_h=n-h$.
For $r>0$, $\sum_{i,j}^{(r)}=\sum_{i=1}^{r-1}\,\sum_{j=i+1}^{r}$.

Recall that any continuous function $f$ defined on a compact set $\mc{C}$ is also uniformly
continuous. That is, there exist an increasing function $\omega(\cdot)$, called
the modulus of continuity of $f$, that is vanishing
at zero and continuous at zero, such that $|f(x)-f(y)|\leq \omega(|x-y|)$ for all
$x,y\in \mc{C}$.

%
\begin{pf}[Proof of Theorem~\ref{theo.DBE-partial}]
Let $\bs{0}_m$ denote an $m\times 1$ vector of zeros (this is a vector with $m$ rows and 1 column). 
This allows us to define the $(\ell(m+1)+1)$-dimensional row vector 
$\bs{w}_\ell^\top 
= 
( d_0 \: \bs{0}_m^{\top} \: d_1 \: \bs{0}_m^{\top} \: \cdots \: d_{\ell-1} \: \bs{0}_m^{\top} \: d_\ell )$,
the corresponding $2\times (\ell(m+1)+2)$ matrix
\[
    \tilde{D} 
    =
	\begin{pmatrix} d_0 & \boldsymbol{0}_m^\top & d_1 & \boldsymbol{0}_m^\top 
	                    & \cdots & d_{\ell-1} & \boldsymbol{0}_m^\top & d_\ell & 0\\ 
	0 & d_0 & \boldsymbol{0}_m^\top & d_1 & \boldsymbol{0}_m^\top & \cdots & d_{\ell-1} 
	& \boldsymbol{0}_m^\top & d_\ell		
	\end{pmatrix} 
	=        
    \begin{pmatrix}
        \bs{w}_\ell^\top & 0\\
          0 & \bs{w}_\ell^\top
    \end{pmatrix},
\]
and the $(\ell(m+1)+2) \times (\ell(m+1) +2)$ symmetric matrix $D = \Tilde{D}^\top\,\tilde{D}$.

It is not difficult to see that for $i\leq n_{\ell_{m+1}}-1$, 
\[
    \bs{y}_{i:(i+1+\ell(m+1))}^\top\,D\,\bs{y}_{i:(i+1+\ell(m+1))}
    =
    \sum_{j=i}^{i+1}\,\Delta_{\ell,m+1}^2(y_j).
\] 
Therefore,
\[
	2n_{\ell_{m+1}}p(\bs{d})Q_{\ell,m+1}(\bs{y};\bs{d})
    =
    \Delta_{\ell,m+1}^2(y_1) + \Delta_{\ell,m+1}^2(y_{n_{\ell_{m+1}}})
    + 
    \sum_{i=1}^{n_{\ell_{m+1}}-1}\,\bs{y}_{i:(i+1+\ell(m+1))}^\top\,D\,\bs{y}_{i:(i+1+\ell(m+1))}.
\]

Since
\begin{align*}
	\bs{y}_{i:(i+1+\ell(m+1))}^\top\,D\,\bs{y}_{i:(i+1+\ell(m+1))}
	&=
	\bs{f}_{i:(i+1+\ell(m+1))}^\top\,D\,\bs{f}_{i:(i+1+\ell(m+1))}\\
	&+
	2\,\bs{f}_{i:(i+1+\ell(m+1))}^\top\,D\,\bs{g}_{i:(i+1+\ell(m+1))}\\
	&+
	\bs{g}_{i:(i+1+\ell(m+1))}^\top\,D\,\bs{g}_{i:(i+1+\ell(m+1))}\\
	&+
	2\,\bs{f}_{i:(i+1+\ell(m+1))}^\top\,D\,\bs{\veps}_{i:(i+1+\ell(m+1))}\\
	&+
	2\,\bs{g}_{i:(i+1+\ell(m+1))}^\top\,D\,\bs{\veps}_{i:(i+1+\ell(m+1))}\\
	&+
	\bs{\veps}_{i:(i+1+\ell(m+1))}^\top\,D\,\bs{\veps}_{i:(i+1+\ell(m+1))},
\end{align*}
and because $(\veps_{i})$ is a zero mean process, we have for any 
$i\leq j$ that for any $(j-i+1)\times(j-i+1)$ matrix $\Sigma$, 
$\ME[ \bs{\veps}_{i:j}^\top\,\Sigma\,\bs{\veps}_{i:j} ] = \mbox{tr}(\Sigma\,\VAR( \bs{\veps}_{i:j} ))$ 
see e.g.~\cite{provost1992quadratic} p.~51, it turns out that
\begin{align*}
	\ME \bs{Q}_{\ell,m+1}(\bs{y};\bs{w})
	&=
	\frac{1}{2\,n_{\ell_{m+1}}}\,\ME\left[\Delta_{\ell,m+1}^2(y_1)+\Delta_{\ell,m+1}^2(y_{n-\ell(m+1)})\right]\\
	&+
	\frac{1}{2\,n_{\ell_{m+1}}}\,\sum_{i=1}^{n_{\ell_{m+1}}-1}\,
	\bs{f}_{i:(i+1+\ell(m+1))}^\top\,D\,\bs{f}_{i:(i+1+\ell(m+1))}\\
	&+
	\frac{1}{n_{\ell_{m+1}}}\,\sum_{i=1}^{n_{\ell_{m+1}}-1}\,
	\bs{f}_{i:(i+1+\ell(m+1))}^\top\,D\,\bs{g}_{i:(i+1+\ell(m+1))}\\
	&+
	\frac{1}{2\,n_{\ell_{m+1}}}\,\sum_{i=1}^{n_{\ell_{m+1}}-1}\,
	\bs{g}_{i:(i+1+\ell(m+1))}^\top\,D\,\bs{g}_{i:(i+1+\ell(m+1))}\\
	&+
	\frac{1}{2\,n_{\ell_{m+1}}}\,\sum_{i=1}^{n_{\ell_{m+1}}-1}\,
	\mbox{tr}(D\,\VAR( \bs{\veps}_{i:(i+1+\ell(m+1))} ))\\
	&=
	\frac{1}{2\,n_{\ell_{m+1}}}\,\left[ 2\,\gamma_0 + o\left( \omega^2(\frac{m+1}{n}) \right) \right]\qquad 
	(\mbox{due to Lemma~\eqref{lemma0}})\\
	&+
	o \left( \omega^2 \left( \frac{m+1}{n} \right) \right)\qquad (\mbox{due to Lemma~\eqref{lemma}})\\
	&+
	o \left( \omega\left(\frac{m+1}{n}\right)\,(m+1)R_\ell(\bs{d})\frac{J_K^{||}}{n_{\ell_{m+1}}} \right)
	\qquad (\mbox{due to Lemma~\ref{lemma4} })\\
	&+
	o\left( (m+1)P_\ell(\bs{d}) \frac{J_K}{n_{\ell_{m+1}}} \right)\qquad (\mbox{due to Lemma~\eqref{lemmaBIAS1}})\\
	&+
	\frac{n_{\ell_{m+1}}-1}{2\,n_{\ell_{m+1}}}(2\gamma_0). 
\end{align*}
The latter identity follows from Proposition~A.1 in the Appendix of \cite{Tecuapetla.Munk.2017}.
The result follows after some algebra. 
\end{pf}\medskip

\begin{pf}[Proof of Theorem~\ref{eq.root-n}]
	Let $M_{K_n}=\max_{0\leq j\leq K_n} |a_{j+1} - a_j|$. First note that
\[
	J_{K_n}^{||}
	=
	\sum_{j=0}^{K_n}|a_j-a_{j+1}|\leq K_n\,M_{K_n}=o(1).
\]	
	Then, due to Remark~\ref{rm1} and the assumption on the largest jump we deduce that
    \[
        J_{K_n} \leq K_n\,M_{K_n}^2 = o (n^{-\epsilon}).
    \]    
        
	Next, observe that
\[
	H_{K_n}^{||}
	\leq 
	M_{K_n}\,n\,\sum_{j=1}^{K_n}\left| \frac{t_j}{n} - \frac{m+1}{2n} \right|
	\leq
	2n\,M_{K_n}\,K_n
	=
	\mc O(n)
\]

	Applying Theorem~\ref{th_var} and \ref{theo_var_GAMMAh}
	along with the expression derived above, we get
\begin{align*}
	\BIAS ( \sqrt{n} \, \wh{\gamma}_h ) 
	&=
	\sqrt{n}\omega^2\left( \frac{m+1}{n} \right) + \mc O(\omega\left(\frac{m+1}{n}\right)\frac{J_{K_n}^{||}}{\sqrt{n}})
	+
	\mc O\left(\frac{J_K}{\sqrt{n}}\right)
	=
	o(1)\\
	\VAR(\sqrt{n}\wh{\gamma}_h)
	&=
	\vartheta_3(m,h,\gamma(\cdot))
	+
	\left(2(m+h)+6 + o(n^{-1})\right)o(n^{-1/2})\\
	&+
	\left( 2(m+1)\left\{ 1 + \gamma_h -\gamma_0 + o(n^{-1}) \right\}  \right)\,J_{K_n}
	+
	\left( o(n^{-1}) - 1 \right)\,\omega\left(\frac{m+1}{n}\right) J_{K_n}^{||}\\
	&+
	\omega\left( \frac{m+1}{n} \right)\mc O\left( \frac{H_{K_n}^{||}}{n} \right)
	=
	\mc O\left( 1 \right).
\end{align*}    
	This complete the proof.
\end{pf}

\begin{pf}[Proof of Theorem~\ref{th_var}]
We will write $\Delta_{m+1}$ instead of $\Delta_{1,m+1}$.
Writing 
$\wh{\gamma}_0=\bs{Q}_{1,m+1}(\bs{y};\bs{d}_1) = n_{m+1}^{-1}\, \sum_{i=1}^{n_{m+1}}\,\Delta_{m+1}^2(y_i; \bs{d}_1)$, 
we get that
\begin{align}~\label{eq.variance}
    \VAR( \wh{\gamma}_0 )  
    =
	n_{m+1}^{-2}\left[ \sum_{i=1}^{n_{m+1}}\,\VAR( \Delta_{m+1}^2(y_i;\bs{d}_1) )
    +
    2\,\sum_{i,j}^{n_{m+1}}\,\COV( \Delta_{m+1}^2(y_i;\bs{d}_1), \Delta_{m+1}^2(y_j;\bs{d}_1) ) \right].
\end{align}

Note that we can write 
$\Delta_{m+1}(y_i; \bs{d}_1) = \Delta_{m+1}(f_i + g_i; \bs{d}_1) + \Delta_{m+1}(\varepsilon_i; \bs{d}_1)$. 
In order to make the notation easier, we will suppress the vector $\bs{d}_1$ from this point on hoping
for this not to cause any confusion. 

By direct calculation, we get that 
\begin{align}
    \VAR( \Delta_{m+1}^2(y_i) ) 
    &= 
    \VAR( \Delta_{m+1}^2(f_i + g_i) 
    + 
    2\Delta_{m+1}(f_i + g_i) \, \Delta_{m+1}(\varepsilon_i) 
    + 
    \Delta_{m+1}^2(\varepsilon_i) )\notag\\
    &=
    4\Delta_{m+1}^2(f_i + g_i)\,\VAR( \Delta_{m+1}(\varepsilon_i) ) 
    + 
    \VAR( \Delta_{m+1}^2(\varepsilon_i) )\notag\\
    &+
    2\,\COV( \Delta_{m+1}^2(f_i + g_i), 2\Delta_{m+1}(f_i + g_i)\,\Delta_{m+1}(\varepsilon_i) )\notag\\
    &+ 
    2\,\COV( \Delta_{m+1}^2(f_i + g_i), \Delta_{m+1}^2(\varepsilon_i) )\notag\\
    &+ 
    2\,\COV( 2\Delta_{m+1}(f_i + g_i)\Delta_{m+1}(\varepsilon_i), \Delta_{m+1}^2(\varepsilon_i) )\notag\\
    &=
    4\Delta_{m+1}^2(f_i + g_i)\,\gamma_0 + 2\,\gamma_0^2.\label{eq.variance-part1}
\end{align} 

Now, we compute the covariance between the $\Delta_{m+1}^2(y_i)$ terms. 
Since 
$\ME[\Delta_{m+1}^2(y_i)]=\Delta_{m+1}^2(f_i + g_i) + \ME \Delta_{m+1}^2(\varepsilon_i)=\Delta_{m+1}^2(f_i + g_i) + \gamma_0$, 
we can see that
\begin{align}~\label{eq.variance-part2-A}
    \{\Delta_{m+1}^2(y_i) &- \ME[\Delta_{m+1}^2(y_i)] \} \{ \Delta_{m+1}^2(y_j) - \ME[\Delta_{m+1}^2(y_j)] \}
    =
    \Delta_{m+1}^2(\varepsilon_i)\,\Delta_{m+1}^2(\varepsilon_j) \notag\\
    &+
    4\,\Delta_{m+1}(f_i+g_i)\,\Delta_{m+1}(f_j + g_j)\Delta_{m+1}(\varepsilon_{i})\Delta_{m+1}(\varepsilon_{j}) \notag\\ 
    &+
    2[ \Delta_{m+1}(f_i+g_i) \Delta_{m+1}(\varepsilon_i) 
    \Delta_{m+1}^2(\varepsilon_j) + \Delta_{m+1}(f_j+g_j) \Delta_{m+1}(\varepsilon_j) 
    \Delta_{m+1}^2(\varepsilon_i) ] \notag\\
    &-
	2\,\gamma_0 [ \Delta_{m+1}(f_i+g_i) \Delta_{m+1}(\varepsilon_i) 
	+ 
	\Delta_{m+1}(f_j+g_j) \Delta_{m+1}(\varepsilon_j) ] \notag\\
    &-
	\gamma_0 [ \Delta_{m+1}^2(\varepsilon_i) + \Delta_{m+1}^2(\varepsilon_j) ] + \gamma_0^2.	
\end{align}
Due to Gaussianity it follows that for any $i$ and $j$, $\ME[\Delta_{m+1}(\varepsilon_i)] 
= \ME[ \Delta_{m+1}(\varepsilon_i) \Delta_{m+1}^2(\varepsilon_j)  ] = 0.$ In what follows we will need to use the 
following identities concerning central moments of the multivariate normal distribution (see e.g. \cite{Trianta.03}): 
for any integers $r$, $s$, $u$, and $v$
\begin{align}
    \ME[\varepsilon_{r}^{2}\varepsilon_{s}^{2}] 
    &= 
    \gamma_{0}^{2}+2\gamma_{|r-s|}^{2} \notag\\
    \ME[\varepsilon_{r}^{2}\varepsilon_{u}\varepsilon_{v}] 
    &= 
    \gamma_{0}\gamma_{|u-v|}+2\gamma_{|r-u|}\gamma_{|r-v|}\notag\\
    \ME[\varepsilon_{r}\varepsilon_{s}\varepsilon_{u}\varepsilon_{v}] 
    &= 
    \gamma_{|r-s|}\gamma_{|u-v|}+\gamma_{|r-u|}\gamma_{|s-v|}+\gamma_{|r-v|}\gamma_{|s-u|}.~\label{trianta}
\end{align}

Next, observe that
\begin{align*}
    \ME [ \Delta_{m+1}(\varepsilon_i)\,\Delta_{m+1}(\varepsilon_j) ]
    &=
	\gamma_{|j-i|} - \frac{1}{2}\,(\gamma_{|j-i-(m+1)|}+\gamma_{|j-i+(m+1)|}).
\end{align*}
Similar considerations allow us to obtain that
\begin{align*}
    \ME[ \Delta_{m+1}^2(\varepsilon_i)\,\Delta_{m+1}^2(\varepsilon_j) ]
    &=
    \gamma_0^2 
    + 
    2\gamma_{|i-j|}^2
    +
    \frac{1}{2}( \gamma_{|j-i-(m+1)|} + \gamma_{|j-i+m+1|})^2\\
    &-
    2\gamma_{|j-i|}\left( \gamma_{|j-i-(m+1)|} + \gamma_{|j-i+m+1|} \right).
\end{align*}

Due to $m$-dependency $\gamma_{| j-i + m+1 |}=0$ provided that $j-i > 0$. Thus,
taking expectation on both sides of \eqref{eq.variance-part2-A} and utilizing the identities just derived
we get
\begin{align}~\label{eq.variance-part2-B}
    \COV&(\Delta_{m+1}^2(y_i), \Delta_{m+1}^2(y_j))
    = 
    4 \Delta_{m+1}(f_i + g_i) \Delta_{m+1}(f_j + g_j) 
    \left[ \gamma_{|j-i|} - \frac{1}{2} \gamma_{|j-i-(m+1)|} \right] \notag\\
    &+
    2\gamma_{|j-i|}^2 + \frac{1}{2} \gamma_{|j-i-(m+1)|}^2
    -
    2\gamma_{|j-i|}\,\gamma_{|j-i-(m+1)|}.
\end{align}

Substituting \eqref{eq.variance-part1} and \eqref{eq.variance-part2-B} into \eqref{eq.variance}, we get that
  \begin{align}~\label{eq.varianceGeneral}
     &n_{m+1}^2\VAR( \bs{Q}_{1,m+1}(\bs{y}) ) 
     =
     4\gamma_0\sum_{i=1}^{n_{m+1}}\Delta_{m+1}^2(f_i+g_i) + 2 n_{m+1}\gamma_0^2\notag\\
     &+
     8\sum_{i,j}^{(n_{m+1})}\Delta_{m+1}(f_i + g_i) \Delta_{m+1}(f_j + g_j) 
	\left[ \gamma_{|j-i|} - \frac{1}{2} \gamma_{|j-i-(m+1)|} \right]\notag \\
     &+
     4\sum_{i,j}^{(n_{m+1})}\gamma_{|j-i|}^2+\sum_{i,j}^{(n_{m+1})}\gamma_{|j-i-(m+1)|}^2
     -
     4\sum_{i,j}^{(n_{m+1})}\gamma_{|j-i|}\gamma_{|j-i-(m+1)|} \notag \\
     &=
     4\gamma_0n_{m+1}^2\,
     o \left( \left[\frac{(m+1)\,\omega \left( \frac{m+1}{n} \right)}{n_{m+1}} \right]
     \left[ \frac{ \omega( \frac{m+1}{n} ) }{m+1} 
     + 
     \frac{J_K}{2\omega(\frac{m+1}{n})\,n_{m+1}} 
     + 
     \frac{J_K^{||}}{2\,n_{m+1}} \right] \right)(\mbox{see Lemma~\ref{lemma2-proofs}})\notag\\
     &+
     2n_{m+1}\gamma_0^2
     +
     4\gamma_0\,m(m+1)\omega(\frac{m+1}{n})
     \left[ \left(\frac{ n_{m+1}-1 }{m+1}\right)\,
     \omega( \frac{m+1}{n} ) \notag \right.\\
     &+ \left.
     \mc O \left( \frac{J_K}{2 \omega( \frac{m+1}{n} ) } \right)
     +
     \left( \frac{1+\sqrt{2}}{\sqrt{2}} \right)\,J_K^{||} \right] 
     \quad (\mbox{due to Lemma~\ref{lemma3-proofs}})\notag\\
     &-
     2\gamma_0
     \left[ 2m(n_{m+1}-1)\omega^2( \frac{m+1}{n} ) + \frac{m(m+1)}{2}\mc O (J_K) \notag \right.\\
     &+\left.
     \frac{m(m+1)}{\sqrt{2}}\,\omega( \frac{m+1}{n} ) J_K^{||} + \omega( \frac{m+1}{n} )\,\mc O(m H_K^{||})
     \right] (\mbox{due to Lemma~\ref{lemma4-proofs}})\notag\\
     &+
     (2m+1)n_{m+1}\gamma_0^2\quad (\mbox{due to Lemma~\ref{lemma5-proofs}}).
 \end{align}
 After multiplying both sides of \eqref{eq.varianceGeneral} by $n_{m+1}^2$, arraging some terms and doing some algebra, the proof is completed.
\end{pf}\medskip

\begin{pf}[Proof of Theorem~\ref{theo_var_GAMMAh}]
 We will write $\Delta_h$ instead of $\Delta_{1,h}$.
 We begin by using that for $h = 1,\ldots,m$,
 \begin{equation}~\label{eq.var.gammah}
    \VAR( \wh{\gamma}_h ) 
    = 
    \VAR( \bs{Q}_{1,m+1}(\bs{y}; \bs{d}) ) + \VAR( \bs{Q}_{1,h}(\bs{y}) )
    - 2 \COV(\bs{Q}_{1,m+1}(\bs{y}; \bs{d}), \bs{Q}_{1,h}(\bs{y})).
 \end{equation}
 Recall that $\VAR( \bs{Q}_{1,m+1}(\bs{y}; \bs{d}) )$ was established above, cf.~Theorem~\ref{th_var}. 
 Following the ideas and calculations leading 
 to \eqref{eq.variance}-\eqref{eq.varianceGeneral} we find that
 \begin{align*}
    &\VAR( \bs{Q}_{1,h}(\bs{y}) )
    =
    \frac{ 2(\gamma_0 - \gamma_h)^2 }{n_h} + \frac{2(\gamma_0 - \gamma_h)}{n_h^2}\,
    \sum_{i=1}^{n_h}\Delta_h^2(f_i + g_i)  \\
    &+
    \frac{2}{n_h^2}\sum_{i,j}^{(n_h)}\Delta_h(f_i + g_i)\,\Delta_h(f_j + g_j)\, 
    [ 2\gamma_{|j-i|} - \left( \gamma_{|j-i+h|} + \gamma_{|j-i-h|} \right) ] \\ 
    &+
    \frac{1}{n_h^2}\sum_{i,j}^{(n_h)} [ 4 \gamma_{|j-i|}^2 
    + 
    ( \gamma_{|j-i+h|} + \gamma_{|j-i-h|} )^2 - 4 \gamma_{|j-i|}( \gamma_{|j-i+h|} + \gamma_{|j-i-h|} ) ].
 \end{align*}
  
 We conclude this calculation by adapting Lemmas~\ref{lemma2-proofs}, \ref{lemma3-proofs} 
 and \ref{lemma4-proofs} from the Appendix~\ref{sec.appendix} to the current situation. 
 More precisely, following the proof of each of those lemmas, line by line, we can show 
 first that
 \begin{equation*}
  \frac{1}{n_h^2}\sum_{i=1}^{n_h}\Delta_h^2(f_i + g_i) 
  = 
  o\left( \left[ \frac{h \omega( h/n )}{n_h} \right]\,
  \left[ \frac{ \omega(h/n) }{h} 
  + \frac{J_K}{2\,\omega(h/n)\,n_h} 
  + \frac{J_K^{||}}{2\,n_h}  \right]  \right)
 \end{equation*}
 Next, denote $[ \Delta_h(f + g) ]_{i,j} = \Delta_{h}(f_i+g_i)\Delta_{h}(f_j+g_j)$ and verify directly that
  \begin{equation*}
    \frac{4}{n_h^2}\sum_{i,j}^{(n_h)}\,[ \Delta_h( f + g ) ]_{i,j}\,\gamma_{|j-i|} 
    =
o\left( \left[ \frac{\gamma_0mh\omega(h/n)}{n_h} \right]
	\left[ 
	2\frac{\omega(h/n)}{h} 
	+ 
	\frac{J_K}{\omega(h/n)\,n_h}
	+ 
	\sqrt{2}(1+\sqrt{2})\frac{J_K^{||}}{n_h} 
	\right] 
\right)
 \end{equation*}
 and
   \begin{equation*}
   \frac{2}{n_h^2}\sum_{i,j}^{(n_h)}\,
   [ \Delta_h( f + g ) ]_{i,j}\,\gamma_{|j-i-h|} 
   =
	o\left( 
	\left[ 
	\frac{\gamma_0 h \omega(\frac{h}{n})}{n_h}	
	\right]	
	\left[
	2\omega\left(\frac{h}{n}\right)
	+
	\frac{(h+1)\,J_K}{2\omega(\frac{h}{n})n_h}
	+
	\frac{(m+h)\,J_K^{||}}{\sqrt{2}\,n_h}
	+
	\frac{H_{K,h}^{||}}{n_h}
	\right]
	\right)
   \end{equation*}

	An obvious adaptation of Lemma~\ref{lemma5-proofs} allows us
	to see that the third term of $\VAR( \bs{Q}_{1,h}(\bs{y}) )$ above 
	is bounded by $4(4m+h)\gamma_0^2/n_h$.

	Combining the above we get that
 \begin{align}~\label{eq.var.h} 
     &\VAR( \bs{Q}_{1,h}(\bs{y}) )
     =
	\frac{\kappa_1(m,h,\gamma(\cdot))}{n_h}
	+
	o\left( 
	\kappa_2(m,h,\gamma(\cdot))\,\frac{\omega^2(\frac{h}{n})}{n_h}\right)\notag\\ 
	&+
	o\left(
	\kappa_3(n,m,h,\gamma(\cdot))\,\frac{J_K}{n_h}
	+
	\kappa_4(n,m,h,\gamma(\cdot))\,\frac{\omega(\frac{h}{n})\,J_K^{||}}{n_h}
	+
	h\,\frac{\omega(\frac{h}{n})\,H_{K,h}^{||}}{n_h^2}\,\gamma_0
	\right)
 \end{align}
 where
 \begin{align*}
 	\kappa_1(h,m,\gamma(\cdot))
 	&=
 	2(\gamma_0-\gamma_h)^2 + 4(4m + h)\gamma_0^2,\quad
 	\kappa_2(m,h,\gamma(\cdot))
 	=
 	2\left[ (m+h+1)\gamma_0 - \gamma_h \right]\\
 	\kappa_3(n,m,h,\gamma(\cdot))
 	&=
 	\frac{h}{n_h}\,\left[\left( \frac{2m+h+3}{2} \right)\gamma_0 - \gamma_h\right]\\
 	\kappa_4(n,m,h,\gamma(\cdot))
 	&=
 	\frac{h}{n_h}\,
 	\left\{
 	\left[ \frac{(3 + \sqrt{2})m + h + \sqrt{2}}{\sqrt{2}} \right]\gamma_0 
 	- \gamma_h\right\}.
 \end{align*}

 We now move on to the computation of the covariance between 
 $\bs{Q}_{1,m+1}(\bs{y}; \bs{d})$ and $\bs{Q}_{1,h}(\bs{y})$.
	Let us write $A_{m+1} := \ME[\bs{Q}_{1,m+1}(\bs{y}; \bs{d})]$
 and $B_h := \ME[\bs{Q}_{1,h}(\bs{y})]$. Since $\COV(\bs{Q}_{1,m+1}(\bs{y}; \bs{d}), \bs{Q}_{1,h}(\bs{y}))=\ME[ \bs{Q}_{1,m+1}(\bs{y}; \bs{d})\, \bs{Q}_{1,h}(\bs{y}) ] - A_{m+1}B_{h}$,
 and due to Proposition~\ref{prop.1},
 \begin{equation}\label{eq.cov-1-aux1}
	A_{m+1}B_{h}
	=
	\gamma_0( \gamma_0 - \gamma_h )	 + o\left( r_n(m+1, \omega(\cdot), J_K, J_K^{||}) \right)
 \end{equation}
 we must compute the expected value of $\bs{Q}_{1,m+1}(\bs{y}; \bs{d})\, \bs{Q}_{1,h}(\bs{y})$
 in order to complete this part of the proof.
 
 It is readily seen that
\begin{align}\label{eq.cov-1-aux2}
	\ME &[ \bs{Q}_{1,m+1}(\bs{y}; \bs{d})\, \bs{Q}_{1,h}(\bs{y}) ]
	=
	\frac{1}{2p(\bs{d})\,n_{m+1}n_h }\,
	\sum_{i=1}^{n_{m+1}}\,\sum_{j=1}^{n_h}\,
	\ME \left[ \Delta_{m+1}^2(y_i)\,\Delta_{h}^2(y_j)\right]\notag\\
	&=
	\gamma_0(\gamma_0-\gamma_h) +
	o\left( \frac{\gamma_0}{2}
	 \left[h\omega\left(\frac{h}{n}\right)\right]\left[ \frac{\omega(h/n)}{h} 
	 + \frac{J_K}{2\omega(h/n)\,n_h} 
	+ 
	\frac{J_K^{||}}{2n_h} \right]  \right)
	\quad \mbox{(due to Lemma~\eqref{lemma2-proofs})}\notag\\
	&+
	o\left( (\gamma_0-\gamma_h) 
	\left[ (m+1)\omega\left( \frac{m+1}{n} \right)\right] 
	\left[ \frac{\omega\left(\frac{m+1}{n}\right)}{m+1} 
	+ 
	\frac{J_K}{2\omega\left(\frac{m+1}{n}\right)\,n_{m+1}} 
	+ 
	\frac{J_K^{||}}{2n_{m+1}} \right] \right)
	\quad \mbox{(see Lemma~\ref{lemma2-proofs})}\notag\\
	&+
	o\left(  
	\left[ (m+1)\omega\left( \frac{m+1}{n} \right)\right]^2 
	\left[ \frac{\omega\left(\frac{m+1}{n}\right)}{m+1} 
	+ 
	\frac{J_K}{2\omega\left(\frac{m+1}{n}\right)\,n_{m+1}} 
	+ 
	\frac{J_K^{||}}{2n_{m+1}} \right]^2 \right)\notag\\
	&+
	\frac{\gamma_0}{4\sqrt{2}}
	o\left( \left[ \frac{(m+1)\omega\left(\frac{m+1}{n}\right)}{n_{m+1}} \right]
	\left[ \left(\frac{3m+2}{m+1}\right)\omega\left( \frac{m+1}{n} \right) 
	+
	\left( \frac{m+h+1}{2} \right)\frac{J_K}{\omega\left( \frac{m+1}{n} \right)\,n_{m+1}}
	+\right.\right.\notag\\
	&+\left.\left.
	\left( \frac{ (2+\sqrt{2})m+h }{\sqrt{2}} \right)\frac{J_K^{||}}{n_{m+1}} 
	+
	\mc O \left(\frac{H_{K,h}^{||}}{n_h}\right)
	\right] \right) \quad (\mbox{due to Lemma~\ref{lemma3-proofs}})\notag\\
	&+
	8\gamma_0^2/n_{m+1} \quad (\mbox{due to Lemma~\ref{lemma5-proofs}}).
\end{align}
  
	Combining \eqref{eq.cov-1-aux1} and \eqref{eq.cov-1-aux2} we get
\begin{align}\label{eq.cov-1-aux}
	\COV(\bs{Q}_{1,m+1}&(\bs{y}; \bs{d}), \bs{Q}_{1,h}(\bs{y}))
	=
	\frac{8\gamma_0^2}{n_{m+1}}
	+
	\kappa_5(n,m,h,\gamma(\cdot))\,\frac{ \omega^2\left( \frac{m+1}{n} \right) }{n_{m+1}} \notag\\
	&+
	o\left( \kappa_6(n,m,h,\gamma(\cdot))\,\frac{J_K}{n_{m+1}} 
	+
	\kappa_7(n,m,h,\gamma(\cdot))\,\omega\left( \frac{m+1}{n} \right)\,\frac{J_K^{||}}{n_{m+1}}\notag \right.\\
	&+\left.
	\frac{\gamma_0}{4\sqrt{2}}(m+1)\omega\left( \frac{m+1}{n} \right)\,\frac{H_{K,h}^{||}}{n_{m+1}^2} \right),
\end{align}
where 
\begin{align*}
	\kappa_5(n,m,h,\gamma(\cdot)) 
	&= 
	\gamma_0-\gamma_h-1 + \left(\frac{3m+2}{n_{m+1}} \right) \frac{\gamma_0}{4\sqrt{2}} \\
	\kappa_6(n,m,h,\gamma(\cdot))
	&=
	(m+1)\,
	\left\{
	\gamma_0-\gamma_h-1 + \left(\frac{m+h+1}{n_{m+1}}\right) \frac{\gamma_0}{8\sqrt{2}} \right\}\\
	\kappa_7(n,m,h,\gamma(\cdot))
	&=
	(m+1)\left( \frac{\gamma_0-\gamma_h}{2} - \frac{1}{\sqrt{2}} \right)
	+
	\left(\frac{(2 + \sqrt{2})m+h}{n_{m+1}}\right)\,\frac{\gamma_0}{4\sqrt{2}}
\end{align*}

 The proof is complete once we substitute \eqref{eq.VAR.GAMMA0} (see Theorem~\ref{th_var}), 
 \eqref{eq.var.h} and \eqref{eq.cov-1-aux} into \eqref{eq.var.gammah}, and group terms
 appropriately. 
\end{pf}

\section[Appendix B\hfill]{Proofs of Lemmas used in Appendix~\ref{sec.supp}}~\label{sec.appendix}
In addition to the notation introduced in Appendix~\ref{sec.supp}, in this
section for generic vectors $\bs{u}$
and $\bs{v}$ in $\R^d$, we will write $\inner{\bs{u}}{\bs{v}}$ to denote their inner product and $\|\bs{u}\|^2=\inner{\bs{u}}{\bs{u}}$ for the norm of $\bs{u}$.

\begin{lemma}~\label{lemma0}
	Suppose that the conditions of Theorem~\ref{theo.DBE-partial} hold. Then
	for $i\in \{1,n-\ell(m+1)\}$
	\begin{equation}\label{eq-EV-Delta}
		\ME \Delta_{\ell,m+1}^2(y_i; \bs{d})
		=
		\gamma_0\,\left( \sum_{s=0}^\ell\,d_s^2 \right) + o\left( \omega^2\left( \frac{m+1}{n} \right) \right).
	\end{equation}
\end{lemma}
\begin{pf}
	Due to stationarity and $m$-dependence, it can be seen that for any $i$,
	\begin{equation}~\label{eq-EV-Delta-A}
	\ME \Delta_{\ell,m+1}^2(y_i; \bs{d})
	=
	\gamma_0\,\left( \sum_{s=0}^\ell\,d_s^2 \right)
	+
	\left(
	\Delta_{\ell,m+1}(f_i; \bs{d}) + \Delta_{\ell,m+1}(g_i; \bs{d})
	\right)^2.
	\end{equation}
	
	Observe that for any $i$, $\Delta_{\ell,m+1}(f_i; \bs{d})$ can be written as a pseudo
	telescopic sum. Indeed,
	\begin{align*}
	\Delta_{\ell,m+1}(f_i; \bs{d})
	&=
	d_0f_i + d_1f_{i+m+1} + \cdots + d_{\ell-1}f_{i+(\ell-1)(m+1)} + d_lf_{i+\ell(m+1)}\\
	&=
	d_0(f_i-f_{i+m+1})
	+
	(d_0+d_1)f_{i+m+1}+\cdots+d_\ell\,f_{i+\ell(m+1)}\\
	&\vdots\\
	&=
	d_0(f_i-f_{i+m+1})
	+
	(d_0+d_1)(f_{i+m+1}-f_{i+2(m+1)})
	+\cdots\\
	&+
	(d_0+d_1+\cdots+d_{\ell-1})(f_{i+(\ell-1)(m+1)}-f_{i+\ell(m+1)})\\
	&+
	(d_0+\cdots+d_{\ell})(f_{i+\ell(m+1)})\\
	&=
	\sum_{r=0}^{\ell-1}\left( \sum_{s=0}^{r} d_s \right)\,\Delta_{1,m+1}( f_{i+r(m+1)}; (1,-1) ).
	\end{align*}
	The last identity follows because $\sum_{s=0}^l\,d_s=0$. The same arguments
	used above yield
	\begin{equation}~\label{eq-Delta-telescopic-g}
	\Delta_{\ell,m+1}(g_i; \bs{d})
	=
	\sum_{r=0}^{\ell-1}\left( \sum_{s=0}^{r} d_s \right)\,\Delta_{1,m+1}( g_{i+r(m+1)}; (1,-1) ).
	\end{equation}
	In what follows we will suppress the vector $(1,-1)$ from the notation
	hoping to cause no confusion.
	
	Using that for any sequence of real numbers $a_1,\ldots, a_n$,
	\begin{equation*}
	(\sum_{i=1}^n\,a_i)^2
	=
	\sum_{i=1}^n\,a_i^2 + 2 \sum_{j=1}^{n-1}\sum_{k=j+1}^n\,a_j\,a_k,	
	\end{equation*}
	it follows that for any $i$,
	\begin{align}
	\Delta_{\ell,m+1}^2&(f_i;\bs{d})
	=
	\sum_{r=0}^{\ell-1}\left( \sum_{s=0}^r\,d_s \right)^2\,\Delta_{1,m+1}^2(f_{i+r(m+1)})\notag\\
	&+
	2\times \ind_{[2,\infty)}(\ell)\,
	\sum_{r=0}^{\ell-2}\,\sum_{p=r+1}^{\ell-1}\,\left( \sum_{s=0}^r\,d_s \right)\left( \sum_{q=0}^{p} d_q \right)\,
	\Delta_{1,m+1}(f_{i+r(m+1)})\,\Delta_{1,m+1}(f_{i+p(m+1)}).~\label{eq.binSquare.applied}
	\end{align}
	
	Next, observe that for any $i$ and $0\leq r\leq \ell-1$,
	\[
	\Delta_{1,m+1}^2(f_{i+r(m+1)})
	\leq 
	\omega^2\left( \frac{m+1}{n} \right),
	\]
	and similarly, for any $0\leq r \leq \ell-2$ and $r+1\leq p \leq \ell-1$,
	\[
	\left| \Delta_{1,m+1}(f_{i+r(m+1)})\,\Delta_{1,m+1}(f_{i+p(m+1)}) \right|	
	\leq
	\omega^2\left( \frac{m+1}{n} \right).
	\]
	
	Therefore for any $i$,
	\begin{equation}~\label{eq-Delta-f}
	\Delta_{\ell,m+1}^2(f_i)
	\leq 
	\left[ \sum_{r=0}^{\ell-1}\left( \sum_{s=0}^r\,d_s \right)^2 
	+ 2\cdot \ind_{[2,\infty)}(\ell)\,
	\sum_{r=0}^{\ell-2}\,\sum_{p=r+1}^{\ell-1}\,\left( \sum_{s=0}^r\,d_s \right)\left( \sum_{q=0}^{p} d_q \right)
	\right]\,\omega^2\left( \frac{m+1}{n} \right).	
	\end{equation}
	
	Next, let us consider $\Delta_{\ell,m+1}(g_i)$,
	see \eqref{eq-Delta-telescopic-g}. 
	Since for $0\leq r\leq \ell-1$, $t_0 < 1+r(m+1) < 1+(r+1)(m+1) \leq t_1$,
	it follows that $\Delta_{1,m+1}(g_{1+r(m+1)})=g_{1+r(m+1)}-g_{1+(r+1)(m+1)}=0$.
	Hence, $\Delta_{\ell,m+1}(g_1)=0$. Similarly it can be proved that
	$\Delta_{\ell,m+1}(g_{n_{\ell_{m+1}}})=0$.
	
	Since for any $a,b\in \R$, $(a+b)^2 \leq 4 (a^2 \vee b^2)$, we can combine
	\eqref{eq-Delta-f} and the previous paragraph to conclude that for $i\in \{1,n-\ell(m+1)\}$,
	\begin{equation}~\label{eq-EV-Delta-B}
	\left(
	\Delta_{\ell,m+1}(f_i; \bs{d}) + \Delta_{\ell,m+1}(g_i; \bs{d})
	\right)^2
	=
	o\left( \omega^2 \left(\frac{m+1}{n}\right) \right).
	\end{equation}
	The result follows once we combine \eqref{eq-EV-Delta-A} and \eqref{eq-EV-Delta-B}.
\end{pf}\medskip

\begin{lemma}~\label{lemma}
	Suppose that the conditions of Theorem~\ref{theo.DBE-partial} hold. Then,
  \[
	\frac{1}{n_{\ell_{m+1}}}\sum_{i=1}^{n_{\ell_{m+1}-1}}\,
	\bs{f}_{i:(i+1+\ell(m+1))}^\top\,D\,\bs{f}_{i:(i+1+\ell(m+1))}
	=
	o\left( \omega^2\left(\frac{m+1}{n}\right) \right).
  \]  
\end{lemma}
\begin{pf}
	Since for $i \leq n_{\ell_{m+1}}$,
	\begin{align}
		&\bs{f}_{i:(i+1+\ell(m+1))}^\top\,D\,\bs{f}_{i:(i+1+\ell(m+1))}
		=
		\|\tilde{D}\bs{f}_{i:(i+1+\ell(m+1))}\|^2 \notag \\
		&=
		\inner{\bs{w}_\ell^\top}{\bs{f}_{i:(i+\ell(m+1))}}^2 
		+ 
		\inner{\bs{w}_\ell^\top}{\bs{f}_{(i+1):(i+1+\ell(m+1))}}^2 
		=
		\Delta_{\ell,m+1}^2(f_i;\bs{d}) + \Delta_{\ell,m+1}^2(f_{i+1};\bs{d}),\label{eq_lem1_A}
	\end{align} 		
	in what follows we will consider $\Delta_{\ell,m+1}^2(f_i;\bs{d})$
	only as the second term above can be handled similarly. 
	
	From \eqref{eq-Delta-f} we get that
	\begin{equation}\label{eq_lem1_B}
	\frac{1}{n_{\ell_{m+1}}}\,\sum_{i=1}^{n_{\ell_{m+1}}}\,\Delta_{\ell,m+1}^2(f_i; \bs{d})
	=
	o\left( \omega^2 \left( \frac{m+1}{n} \right) \right).
	\end{equation}	
	The result follows once we combine \eqref{eq_lem1_A} and \eqref{eq_lem1_B}.
\end{pf}\medskip

\begin{lemma}~\label{lemmaBIAS1}
	Suppose that the conditions of Theorem~\ref{theo.DBE-partial} hold. Then 
    {
    \begin{align*}
        \sum_{i=1}^{n_{\ell_{m+1}}-1}\,&\bs{g}_{[i:(i+1+\ell(m+1))]}^\top\,D\,\bs{g}_{[i:(i+1+\ell(m+1))]}
        =
		o\left( (m+1)\, J_K\, P_\ell(\bs{d}) \right)
    \end{align*}}
    where for $\ell\geq1$
    \[
        P_\ell(\bs{d})
        =
        \sum_{r=0}^{\ell-1}(\ell-r)\left( \sum_{s=0}^r d_s \right)^2
        +
        2\times\ind_{[2,\infty)}(\ell)\times
        \left(
        \sum_{r=0}^{\ell-2}(\ell-1-r)
        \sum_{s=0}^r\,d_s\,\sum_{p=r+1}^{\ell-1}\sum_{q=0}^{s}\,d_q
        \right).
    \]
\end{lemma}
\begin{pf}
	From \eqref{eq_lem1_A} it can be seen that
	\begin{equation}~\label{eq.GDG}
	\sum_{i=1}^{n_{\ell_m}-1}\,\bs{g}_{i:(i+1+\ell(m+1))}^\top\,D\,\bs{g}_{i:(i+1+\ell(m+1))}
	=
	\sum_{i=1}^{n_{\ell_m}-1}\,\Delta_{\ell,m+1}^2(g_i;\bs{d}) 
	+ 
	\sum_{i=1}^{n_{\ell_m}-1}\,\Delta_{\ell,m+1}^2(g_{i+1};\bs{d}).
	\end{equation}
	In what follows we will consider the first term only
	as the second term on the right-hand side above 
	can be handled similarly. Occasionally, we will suppress $\bs{d}$ from the notation
	hoping to cause no confusion.
 
 It can be seen that
 {
 \begin{align}
 	&\sum_{i=1}^{n_{\ell_m}-1}\,\Delta_{\ell,m+1}^2(g_i)
 	<
 	\sum_{i=1}^{n_{\ell_m}}\,\Delta_{\ell,m+1}^2(g_i)
 	=
 	\sum_{i=0}^{K-1}\,\sum_{j=t_i}^{t_{i+1}}\,\Delta_{\ell,m+1}^2(g_j)\notag\\
 	&=
 	\sum_{i=0}^{K-1}\,
 	\left(
		\sum_{j=t_i}^{t_{i+1}-\ell(m+1)-1}\,\Delta_{\ell,m+1}^2(g_j)
		+
		\sum_{j=t_{i+1}-\ell(m+1)}^{t_{i+1}-1}\,\Delta_{\ell,m+1}^2(g_j)
 	\right).~\label{eq.BOUND.sil}
 \end{align}}
 
	Using \eqref{eq-Delta-telescopic-g} and \eqref{eq.binSquare.applied},
	we can write for any $i \leq n_{\ell_{m+1}}$ and for $\ell\geq 1$
	\begin{equation}~\label{eq.squared-a}
		\Delta_{\ell,m+1}^2(g_i;\bs{d})
		=
		A_{i,\ell}(\bs{d}) + B_{i,\ell}(\bs{d}),
	\end{equation}
	where
 	\begin{align*}
 	A_{i,\ell}(\bs{d})
 	&=
	\sum_{r=0}^{\ell-1}\left(\sum_{s=0}^r\,d_s\right)^2\,\Delta_{1,m+1}^2(g_{i+r(m+1)}),\\
 	B_{i,l}(\bs{d})
 	&=
 	2\cdot \ind_{[2,\infty)}(\ell)\,\sum_{r=0}^{\ell-2}\sum_{p=r+1}^{\ell-1}\,
 	\left(\sum_{s=0}^r\,d_s\right)\,\Delta_{1,m+1}(g_{i+r(m+1)})
 	\left(\sum_{q=0}^p\,d_q\right)\,\Delta_{1,m+1}(g_{i+p(m+1)}).
 	\end{align*}

 	Now, we focus on the first term on the right-hand side of Eq.~\eqref{eq.BOUND.sil}. 
	For the following arguments, suppose that $t_j$ is a generic change-point (fixed). 
	Observe that for any $s\in \{0,\ldots,\ell\}$ and any
 	$i\in\{t_j, t_j+1,\ldots, t_{j+1}-\ell(m+1)-2, t_{j+1}-\ell(m+1)-1\}$,
 	$t_j\leq t_j+s(m+1)\leq i+s(m+1)\leq t_{j+1}+(s-l)(m+1)-1<t_{j+1}$,
 	which implies that $g_{i+s(m+1)}=a_j$. Therefore, $\Delta_{1,m+1}^2(g_{i+r(m+1)})=0$
 	for $i\in[t_j,t_{j+1}-\ell(m+1)-1]$ and $0\leq r\leq \ell-1$. 
 	Similarly, it can be seen that $\Delta_{1,m+1}(g_{i+r(m+1)})\,\Delta_{1,m+1}(g_{i+p(m+1)})=0$	
 	for $i\in\{t_j, t_j+1,\ldots, t_{j+1}-\ell(m+1)-2, t_{j+1}-\ell(m+1)-1\}$, 
 	$0\leq r\leq \ell-2$ and $r+1\leq p\leq \ell-1$.

	Since the above holds true for any generic change-point $t_j$,
	we conclude that
 	\begin{align}~\label{eq.firstBOUND}
 	\sum_{j=0}^{K-1}\, 	
 	\sum_{i=t_j}^{t_{j+1}-\ell(m+1)-1}\Delta_{\ell,m+1}^2(g_i)
	=
	0.
 	\end{align}
 	
 	Next, we move to the second summand on the right-hand side of Eq.~\eqref{eq.BOUND.sil}. 
 	For the following arguments, let us suppose that $t_j$ is a generic 
 	(and fixed) change-point. For $r\in\{0,1,\ldots,\ell-1\}$ define the 
 	sets 
\[
	{\mc C}_{j,r}=\{t_j - (r+1)(m+1),\ldots, t_j - 1 \}.
\] 	
 	Note that when $i$ takes values
 	on the set $\mc{C}_{j,r}$
 	for $r\in\{0,1,\ldots,\ell-1\}$ then both $g_{i+r(m+1)}$ and
 	$g_{i+(r+1)(m+1)}$ are equal to $a_{j-1}$. Due to the definition
 	of $g$, when $i\geq t_j-(r+1)(m+1)$ then $g_{i+(r+1)(m+1)}$ takes
 	the value of the next jump, that is $a_j$. In summary,
 \[
    \Delta_{1,m+1}(g_{i+r(m+1)}) 
    =
    (a_{j-1} - a_j)
    \ind_{{\mc C}_{j,r}}(i),\quad
    r=0,\ldots,\ell-1.
 \]
 Note also that by construction, for $r>s$, $\mc{C}_{j,r} \supset \mc{C}_{j,s}$.

 In light of the above,
\begin{align}~\label{eq.BOUND.A}
	\sum_{i=t_{j+1}-\ell(m+1)}^{t_{j+1}-1}\,A_{i,\ell}(\bs{d})
	&=
	\sum_{i\in \mc{C}_{j+1,\ell-1}}\,A_{i,\ell}(\bs{d})\notag\\
	&=
	\sum_{r=0}^{\ell-1}\,\sum_{i\in \mc{C}_{j+1,r}}\,\left( \sum_{s=0}^r\,d_s \right)^2\,
	(a_j-a_{j+1})^2\,\ind_{\mc{C}_{j+1,r}}(i)\notag\\
	&=
	(a_j-a_{j+1})^2\,(m+1)\,\sum_{r=0}^{\ell-1}\,(\ell-r)\,\left( \sum_{s=0}^r\,d_s \right)^2.
\end{align}
 
	For $\ell\geq 2$, similar considerations yield
\begin{align}~\label{eq.BOUND.B}
	&\sum_{i=t_{j+1}-\ell(m+1)}^{t_{j+1}-1}\,B_{i,l}(\bs{d})
	=
	\sum_{i\in \mc{C}_{j+1,l-1}}\,B_{i,l}(\bs{d}) \notag \\
	&=
	2\,
	\sum_{r=0}^{\ell-2}\,
	\sum_{i\in \mc{C}_{j+1,r}}
	\sum_{s=0}^r\,d_s\,\sum_{p=r+1}^{\ell-1}\sum_{q=0}^p\,d_q\,
	(a_{j}-a_{j+1})^2\,\ind_{\mc{C}_{j+1, r\wedge p }}(i) \notag \\	
	&=
	2\,(a_j-a_{j+1})^2(m+1)\,
	\sum_{r=0}^{\ell-2}(\ell-1-r)\,
	\left\{
	\sum_{s=0}^r\,d_s\,\sum_{p=r+1}^{\ell-1}\sum_{q=0}^p\,d_q
	\right\}
\end{align} 
  
   Combining \eqref{eq.BOUND.A} and \eqref{eq.BOUND.B} 
   we get that
   \[
   \sum_{j=0}^{K-1}\,\sum_{i=t_{j+1}-l(m+1)}^{t_{j+1}-1}\,
   \Delta_{\ell,m+1}^2(g_i;\bs{d}) 
   \leq 
   (m+1)P_\ell(\bs{d})\,\sum_{j=0}^{K-1}(a_j - a_{j+1})^2.
   \]
 The result follows once we combine \eqref{eq.GDG}, \eqref{eq.BOUND.sil}, \eqref{eq.firstBOUND} 
 and the inequality above.
\end{pf}\medskip

\begin{lemma}~\label{lemma4}
	Suppose that the conditions of Theorem~\ref{theo.DBE-partial} hold. Then,
	\begin{equation}
	\sum_{i=1}^{n_{\ell_{m+1}}}\,
	\bs{f}_{i:(i+1+\ell(m+1))}^\top\,D\,\bs{g}_{i:(i+1+\ell(m+1))}
	=
	o \left( \omega\left( \frac{m+1}{n} \right)\, (m+1)\, R_\ell(\bs{d})\, J_K^{||} \right),
	\end{equation}
	where for $\ell\geq 1$,
	\[
	R_\ell(\bs{d})
	=
	\sum_{r=0}^{\ell-1}\,\sum_{s=0}^{r}|d_s|\,
	\sum_{r=0}^{\ell-1}(\ell-r)\,\sum_{s=0}^{r}|d_s|.
	\]
\end{lemma}
\begin{pf}
	It is straightforward that
	\begin{align*}
	\bs{f}_{i:(i+1+\ell(m+1))}^\top\,D\,\bs{g}_{i:(i+1+\ell(m+1))}
	&=
	\inner{\tilde{D}\bs{f}_{i:(i+1+\ell(m+1))}}{\tilde{D}\bs{g}_{i:(i+1+\ell(m+1))}}\\
	&=
	\Lambda(f_i,g_i) + \Lambda(f_{i+1},g_{i+1}),
	\end{align*}
	where
	\[
	\Lambda(f_i,g_i)
	=
	\sum_{r=0}^{\ell-1}\left( \sum_{s=0}^r\,d_s \right)\,\Delta_{1,m+1}(f_{i+r(m+1)})
	\times
	\sum_{r=0}^{\ell-1}\left( \sum_{s=0}^r\,d_s \right)\,\Delta_{1,m+1}(g_{i+r(m+1)}).
	\]
	
	It suffices to consider $\sum_{1\leq i\leq n}\,\Lambda(f_i,g_i)$ as 
	the other term can be handled similarly. 
	
	We begin by noticing that for any $i\leq n_{\ell_{m+1}}$,
	\begin{equation}~\label{eq.Lemma4.A}
	| \Delta_{1,m+1}(f_{i+r(m+1)}) | \leq \omega \left( \frac{m+1}{n} \right).	
	\end{equation}	
		
	Next, following the ideas leading to Eq.~\eqref{eq.firstBOUND}, we
	get
	\begin{equation}~\label{eq.Lemma4.B}
	\sum_{j=0}^{K-1}\,\sum_{i=t_j}^{t_{j+1}-\ell(m+1)-1}\,
	\sum_{r=0}^{\ell-1}\,\sum_{s=0}^{r}|d_s|\, | \Delta_{1,m+1}(g_{i+r(m+1)}) |
	=
	0.
	\end{equation}
	
	Similarly as in \eqref{eq.BOUND.A}, with $t_{j+1}^{\ell,m+1}=t_{j+1}-\ell(m+1)$,  we can see that
	\begin{align}
	\sum_{j=0}^{K-1}\,&\sum_{i=t_{j+1}^{\ell,m+1}}^{t_{j+1}-1}\,\sum_{r=0}^{\ell-1}\,\sum_{s=0}^{r}|d_s|\,
	| \Delta_{1,m+1}(g_{i+r(m+1)}) |
	=
	\sum_{j=0}^{K-1}\,\sum_{i\in\mc{C}_{j+1,\ell-1}}\sum_{r=0}^{\ell-1}\,\sum_{s=0}^{r}|d_s|\,
	|a_j-a_{j+1}|\,\ind_{\mc{C}_{j+1,r}}(i)\notag\\
	&=
	\sum_{j=0}^{K-1}\,|a_j-a_{j+1}|\,(m+1)\,\sum_{r=0}^{\ell-1}(\ell-r)\,\sum_{s=0}^{r}\,|d_s|.~~\label{eq.Lemma4.C}
	\end{align}
	
	Therefore, combining Eqs.\eqref{eq.Lemma4.A}, \eqref{eq.Lemma4.B} and \eqref{eq.Lemma4.C}
	we get
	\begin{align*}
	\left|\sum_{i=1}^{n_{\ell_{m+1}}-1}\Lambda(f_i,g_i)\right|
	&\leq 
	\sum_{i=1}^{n}\left| \Lambda(f_i,g_i) \right|
	=
	\sum_{j=0}^{K-1}\,\sum_{i=t_j}^{t_{j+1}}\left| \Lambda(f_i,g_i) \right|\\
	&\leq
	\omega\left(\frac{m+1}{n}\right)\,\sum_{r=0}^{\ell-1}\,\sum_{s=0}^{r}|d_s|\times
	\sum_{j=0}^{K-1}\,\sum_{i=t_{j+1}^{\ell,m+1}}^{t_{j+1}-1}\,\sum_{r=0}^{\ell-1}\,\sum_{s=0}^{r}|d_s|\,
	| \Delta_{1,m+1}(g_{i+r(m+1)}) |\\
	&\leq
	\omega\left(\frac{m+1}{n}\right)\,\sum_{r=0}^{\ell-1}\,\sum_{s=0}^{r}|d_s|\,
	(m+1)\,
	\sum_{r=0}^{\ell-1}(\ell-r)\,\sum_{s=0}^{r}|d_s|\,J_K^{||},
	\end{align*}
	which completes the proof.
\end{pf}\medskip

\begin{lemma}~\label{lemma2-proofs}
    Suppose that the conditions of model \eqref{eq.partialModel} are satisfied. 
    Suppose also that
    \eqref{eq.DistBetweenJumps} and \eqref{eq.DBE-partial.conditions} hold with $\ell=1$. 
    Then
\begin{align*}
	\frac{1}{n_{m+1}^2}\,&\sum_{i=1}^{n_{m+1}}\,\Delta_{m+1}^2( f_i + g_i, \bs{d} )
	=\\
	&
	o\left( 
	\left[ 
		\frac{(m+1) \omega(\frac{m+1}{n})}{n_{m+1}}
	\right]\,
	\left[ 
	\frac{ \omega(\frac{m+1}{n}) }{m+1} 
	+ 
	P_1(\bs{d})\frac{J_K}{\omega( \frac{m+1}{n} )\,n_{m+1}}
	+ 
	R_1(\bs{d})\frac{J_K^{||}}{n_{m+1}} 
	\right]
	\right).
\end{align*}
\end{lemma}
\begin{pf}
    Since
    $\Delta_{m+1}^2( f_i + g_i, \bs{d} )=\Delta_{m+1}^2(f_i; \bs{d})+\Delta_{m+1}^2(g_i; \bs{d})+2\Delta_{m+1}(f_i; \bs{d})\, \Delta_{m+1}(g_i; \bs{d})$
	we get that
\begin{align*}
	n_{m+1}^{-2}\,\sum_{i=1}^{n_{m+1}}\Delta_{m+1}^2(f_i;\bs{d})
	&=
	n_{m+1}^{-1}\,o \left( \omega^2 \left( \frac{m+1}{n} \right)  \right) \qquad (\mbox{due to Lemma~\ref{lemma}, Eq.~\eqref{eq_lem1_B}})\\
	n_{m+1}^{-2}\,\sum_{i=1}^{n_{m+1}}\,\Delta_{m+1}^2(g_i; \bs{d})
	&\leq
	n_{m+1}^{-2}\,(m+1)P_1(\bs{d})\,J_K \qquad (\mbox{due to Lemma~\ref{lemmaBIAS1}})\\
	n_{m+1}^{-2}\,\sum_{i=1}^{n_{m+1}}\,\Delta_{m+1}(f_i; \bs{d})\,\Delta_{m+1}(g_i; \bs{d})
	&=
	n_{m+1}^{-2}o\left(\omega\left(\frac{m+1}{n}\right)(m+1)\,R_1(\bs{d})J_K^{||}\right),
\end{align*}	
where the last equality follows from Lemma~\ref{lemma4}. The result follows after some algebra.
\end{pf}\medskip

\begin{lemma}~\label{lemma3-1}
    Suppose that the conditions of model \eqref{eq.partialModel} are satisfied. 
    Suppose also that
    \eqref{eq.DistBetweenJumps} and \eqref{eq.DBE-partial.conditions} hold with $\ell=1$. 
    Then
    \begin{align}
		\sum_{i=1}^{n-m-2}\,\sum_{j=i+1}^{i+m}\,| g_{j+m+1} - g_j| 
        &= 
        \mc O( m (m+1) J_K^{||} ) \label{eq.lemma3-1A} \\
        \sum_{i=1}^{n-m-2}\,|g_{i+m+1} - g_i|\, \sum_{j=i+1}^{i+m}\,| g_{j+m+1} - g_j| 
        &=
        \mc O\left( \frac{m(m+1)}{2} J_K \right). \label{eq.lemma3-1B}
    \end{align}
\end{lemma}
\begin{pf}
    We begin by establishing \eqref{eq.lemma3-1A}. Recall that $t_j = \lfloor n \tau_j \rfloor$.
    First note that 
    {\footnotesize
    \begin{equation}~\label{eq.lemma3-1A-aux0}
		\sum_{i=1}^{n-m-2}\,\sum_{j=i+1}^{i+m}\,| g_{j+m+1} - g_j|
        \leq 
        \sum_{j=1}^K\,A_j,\quad\mbox{ where }\quad
        A_j
        =
        \sum_{i=t_{j-1}}^{t_j-1}\,\sum_{l=i+1}^{i+m}\,| g_{l+m+1} - g_l|.
    \end{equation}}
    For any $t_j$ with $j\geq 1$, let $\kappa_j = t_j-2(m+1)$.
    We split the computation of $A_j$ in three ways, first we consider
    $i \leq \kappa_j$, then $\kappa_j+1 \leq i \leq \kappa_j+m$ and finally
    $\kappa_j+m+1\leq i \leq t_j-1$.\medskip
    
    Assume that $i\leq \kappa_j$. Note that for
    $l=i+1 \leq t_j-(2m+1)$, $l+m+1 \leq t_j-m$ which implies that $g_{l+m+1}-g_l=0$.
    Note also that, 
    $l=i+m\leq t_j-m-2$ implies that $l+m+1\leq t_j-1$, i.e., $g_{l+m+1}-g_l=0$. 
    Since these arguments hold for any $l=i+1,\ldots,i+m$ when $i\leq \kappa_j$ we have shown that,
    \[
        \sum_{i\leq \kappa_j}\sum_{l=i+1}^{i+m}| g_{l+m+1} - g_l| = 0.
    \]

    Assume now that $\kappa_j+1 \leq i \leq \kappa_j+m$. The arguments presented above
    allow us to get that for $s = 0,1,\ldots, m-1$,
    \[
        \sum_{i=\kappa_j+1+s}^{\kappa_j+1+s}\,\sum_{l=i+1}^{i+m}| g_{l+m+1} - g_l| = (s+1)|a_{j-1} - a_j|.
    \]
    
    Next, assume that $i\in I_{t_j} = \{ \kappa_j+(m+1), \ldots, t_j-1 \}$. 
    With the arguments utilized so far (basically inspection case by case),
    it is not difficult to see  that
    \begin{equation}~\label{eq.lemma3-1A-aux3}
        \sum_{i\in I_{t_j}}\,\sum_{l=i+1}^{i+m}\,|g_{l+m+1} - g_l| = \frac{m(m+1)}{2} \,  |a_{j-1} - a_j|.
    \end{equation}
    
    Eq.~\eqref{eq.lemma3-1A} is established by noticing that
    the righ-hand side of \eqref{eq.lemma3-1A-aux0} is equal to
    \[
        \sum_{j=1}^K\sum_{i=\kappa_j+1}^{t_j-1}\sum_{l=i+1}^{i+m}| g_{l+m+1} - g_l|
        =
        [ 1 + 2 + \cdots + m + \frac{m(m+1)}{2} ]\sum_{j=1}^K\,| a_{j-1} - a_j|.
    \]       
    
    We can utilize the arguments presented above to show Eq.~\eqref{eq.lemma3-1B}. Indeed,
    note first that since $t_j = \lfloor n\tau_j \rfloor$, it is not difficult to see that 
    for $j=1,\ldots, K$,
    \begin{equation}~\label{eq.difference.g}
        g_{i+m+1}-g_i
        =
        \begin{cases}
            0 & \mbox{ for }\tau_{j-1}\leq i\leq \tau_j-(m+2)\\
            a_j - a_{j-1} & \mbox{ for }\tau_{j}-(m+1)\leq i\leq \tau_j-1\\
        \end{cases}.
    \end{equation}
       
    Next, the left-hand side of \eqref{eq.lemma3-1B} is bounded by
    \begin{align*}
    	\sum_{j=1}^K\,\sum_{i=t_{j-1}}^{t_j-1}\,&| g_{i+m+1} - g_i|\,\sum_{l=i+1}^{i+m}| g_{l+m+1} - g_l|\\
    	&\leq 
    	\sum_{j=1}^K\,\sum_{i\in I_{t_j}}| a_{j-1} - a_j |\,\sum_{l=i+1}^{i+m}| g_{l+m+1} - g_l| \qquad (\mbox{by Eq.~}\eqref{eq.difference.g})\\
    	&=
    	\sum_{j=1}^K\,|a_{j-1}-a_j|\,\sum_{i\in I_{t_j}}\,\sum_{l=i+1}^{i+m}| g_{l+m+1} - g_l|\\
    	&=
    	\frac{m(m+1)}{2} \sum_{j=1}^K\,(a_{j-1} - a_j)^2, \qquad (\mbox{by Eq.~}\eqref{eq.lemma3-1A-aux3}).
    \end{align*}                
    This completes the proof.
\end{pf}\medskip

\begin{lemma}~\label{lemma3-proofs}
    Suppose that the conditions of Lemma~\ref{lemma3-1} are satisfied. 
    Then,
\[
	\sum_{i=1}^{n_{m+1}-1}\,\sum_{j=i+1}^{n_{m+1}}\,\Delta_{m+1}(f_i + g_i)\,\Delta_{m+1}(f_j + g_j) \, \gamma_{j-i}
	=
	d_{1}^{2}\gamma_{0}m(m+1)\,\Xi_n( \omega(\cdot), m, d_1, J_K, J_K^{||} ),
\]        
    where 
\[
	\Xi_n( \omega(\cdot), m, d_1, J_K, J_K^{||} )
	=
	\left(\frac{n-m-2}{m+1}\right)\omega^2\left( \frac{m+1}{n} \right)
	+
	\mc{O}\left( \frac{J_K}{2} \right)
	+
	(1+|d_1|)\,\omega\left( \frac{m+1}{n} \right)J_K^{||}.
\]
\end{lemma}
\begin{pf}
    Observe that due to $m$-dependency,
    {\footnotesize
    \[
        \sum_{i=1}^{n_{m+1}-1}\,\sum_{j=i+1}^{n_{m+1}}\,
        \Delta_{m+1}(f_i + g_i)\,\Delta_{m+1}(f_j + g_j) \gamma_{j-i}
        =
        \sum_{i=1}^{n-m-2}\,
        \sum_{j=i+1}^{i+m}\,
        \Delta_{m+1}(f_i + g_i)\,\Delta_{m+1}(f_j + g_j) \gamma_{j-i}.
    \]}
    
    Then, by definition 
    {
    \begin{align*}
        \Delta_{m+1}(f_i + g_i)\,\Delta_{m+1}(f_j + g_j)
        &= 
        \Delta_{m+1}(f_i)\,\Delta_{m+1}(f_j) 
        + 
        \Delta_{m+1}(f_i)\,\Delta_{m+1}(g_j)\\ 
        &+ 
        \Delta_{m+1}(g_i)\,\Delta_{m+1}(f_j) 
        + 
        \Delta_{m+1}(g_i)\,\Delta_{m+1}(g_j).
    \end{align*}}
    Because $d_0+d_1=0$, $\Delta_{m+1}(f_i)\,\Delta_{m+1}(f_j) = d_1^2 (f_{i+m+1} - f_i) (f_{j+m+1} - f_j)$. 
	The uniform continuity of $f$ ensures that
    \[
        | \Delta_{m+1}(f_i)\,\Delta_{m+1}(f_j) | 
        \leq 
        d_1^2\omega^{2}\left(\frac{m+1}{n}\right).
    \]
    Similarly, we find that 
    \begin{align*}
        | \Delta_{m+1}(f_i)\,\Delta_{m+1}(g_j) | 
        &\leq 
        d_1^2\omega\left(\frac{m+1}{n}\right)\left\vert g_{j+m+1} - g_j \right\vert,\\
		| \Delta_m(g_i)\,\Delta_m(g_j) | 
        &= 
        d_1^2| g_{i+m+1} - g_i |\,| g_{j+m+1} - g_j |.
    \end{align*}
        
    Consequently,
    \begin{align}
        \sum_{i=1}^{n-m-2}\,\sum_{j=i+1}^{i+m}\,        
        &\,
        \left\vert \Delta_{m+1}(f_i + g_i)\,\Delta_{m+1}(f_j + g_j) \frac{\gamma_{j-i}}{\gamma_0} \right\vert
        \leq
        d_1^2\,m(n-m-2)\,\omega^{2}\left(\frac{m+1}{n}\right)\notag\\
        &+
        d_1^2\omega\left(\frac{m+1}{n}\right)
        \left\{ \sum_{i,j}| g_{j+m+1} - g_j | 
        + 
        m \sum_{i=1}^{n-m-2}\,| g_{i+m+1} - g_i | \right\} \notag \\ 
        &+
        d_1^2\,\sum_{i=1}^{n-m-2}\,|g_{i+m+1} - g_i|\, 
        \sum_{j=i+1}^{i+m}\,| g_{j+m+1} - g_j|.\label{eq.lemma3-aux} 
    \end{align}

	Next, for a generic and fixed change-point $t_i$, set $\kappa_i=t_i-2(m+1)$ and
	use Lemma~\ref{lemma3-1} to get that
\begin{equation}\label{eq.lemma3-A}
	\sum_{i,j}| g_{j+m+1} - g_j |
	\leq 
	\sum_{i=1}^K\,\sum_{j=\kappa_i+1}^{t_i-1}\,\sum_{l=j+1}^{j+m}| g_{l+m+1} - g_l|
	=
	m(m+1)\,J_K^{||} \qquad (\mbox{due to Eq.~\eqref{eq.lemma3-1A}}).
\end{equation}

	Similarly, we get that
\begin{align}
	\sum_{i=1}^{n-m-2}| g_{i+m+1} - g_i|
	&\leq 
	\sum_{j=1}^K\,\sum_{i=t_{j-1}}^{t_j}| g_{i+m+1} - g_i | \notag\\
	&\leq 
	|d_1|\,\sum_{j=1}^K\,\sum_{i=t_j-(m+1)}^{t_j-1}| a_{j-1} - a_j |\qquad (\mbox{due to Eq.~}\eqref{eq.difference.g}) \notag\\
	&=
	(m+1)|d_1|\,\sum_{j=1}^{K}| a_{j-1} - a_j | = (m+1)|d_1|J_K^{||}.\label{eq.lemma3-B}
\end{align}

The last summand of \eqref{eq.lemma3-aux} is of order $\mc{O}(m(m+1)\,J_K/2)$ according to
Lemma ~\ref{lemma3-1}, Eq.~\eqref{eq.lemma3-1B}. The result follows once we substitute the latter bound
along with \eqref{eq.lemma3-A} and \eqref{eq.lemma3-B} into \eqref{eq.lemma3-aux}.
\end{pf}\medskip

\begin{lemma}~\label{lemma4-1}
    Suppose that the conditions of model \eqref{eq.partialModel} are satisfied. 
    Suppose also that
    \eqref{eq.DistBetweenJumps} and \eqref{eq.DBE-partial.conditions} hold with $\ell=1$. 
    Then
    \begin{align}
        \sum_{i=1}^{n_{m+1}-1}\,\sum_{j=i+1}^{i+2m+1}\,| g_{j+m+1} - g_j| 
        &= 
        \mc O\left( m \, H_K^{||} \right)\label{eq.lemma4-1A} \\
        \sum_{i=1}^{n_{m+1}-1}\,|g_{i+m+1} - g_i|\, \sum_{j=i+1}^{i+2m+1}\,| g_{j+m+1} - g_j| 
        &=
        \mc O\left( \frac{m (m+1)}{2}\,J_K \right), \label{eq.lemma4-1B}
    \end{align}
    where $H_K^{||} = \sum_{j=1}^{K} ( t_j - \frac{m+1}{2} ) | a_{j-1} - a_j|$.
\end{lemma}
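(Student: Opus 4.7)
My plan is to adapt the counting arguments of Lemma~\ref{lemma3-1} to accommodate the wider inner summation range $[i+1, i+2m+1]$. The key observation from Eq.~\eqref{eq.difference.g} still applies: $|g_{j+m+1} - g_j|$ vanishes unless $j$ lies in one of the windows $B_k := \{t_k - (m+1), \ldots, t_k - 1\}$ for some $k$, in which case it equals $|a_{k-1} - a_k|$. Condition~\eqref{eq.DistBetweenJumps} with $l = 1$ forces $t_{k+1} - t_k \geq m + 2$, so consecutive windows $B_k, B_{k+1}$ are disjoint and only a bounded number of consecutive $B_k$'s can intersect any fixed range of $2m+1$ consecutive integers.

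For \eqref{eq.lemma4-1A}, I would interchange the order of summation. For each fixed $j \in \bigcup_k B_k$, the admissible set $\{i : \max(1, j-2m-1) \leq i \leq j-1\}$ has cardinality $\min(j-1, 2m+1)$, so
\[
    \sum_{i=1}^{n_m-1}\sum_{j=i+1}^{i+2m+1}|g_{j+m+1}-g_j|
    \leq
    \sum_{k=1}^{K}|a_{k-1}-a_k|\sum_{j \in B_k}\min(j-1,2m+1).
\]
Using the bound $\min(j-1, 2m+1) \leq j-1$, the inner sum evaluates to $(m+1)(t_k - (m+4)/2)$, which is dominated by $(m+1)(t_k - (m+1)/2)$ up to an additive constant per $k$. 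Summing over $k$ then yields $(m+1)\,H_K^{||} = \mc O(m\, H_K^{||})$.

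For \eqref{eq.lemma4-1B}, both factors are supported on $\bigcup_k B_k$, so I would restrict $i \in B_k$ and $j \in B_{k'}$ and split into self-pairs ($k' = k$) and cross-pairs ($k' > k$). For the self-pair count, parametrizing $i = t_k - m - 1 + s$ with $0 \leq s \leq m$, the intersection $B_k \cap [i+1, i+2m+1] = [t_k - m + s, t_k - 1]$ still has exactly $m - s$ elements, since the extended upper bound $i + 2m + 1$ adds nothing within $B_k$. This reproduces the count $\sum_{s=0}^{m}(m-s) = m(m+1)/2$ from the proof of Lemma~\ref{lemma3-1} and yields a self-pair total of $\frac{m(m+1)}{2}\, J_K$. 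For cross-pairs, the distance condition restricts $k' - k$ to a small constant range; in each admissible case the number of interacting $(i,j) \in B_k \times B_{k'}$ is at most $(m+1)^2$, and the product $|a_{k-1} - a_k| \cdot |a_{k'-1} - a_{k'}|$ is bounded by AM--GM via $\tfrac{1}{2}\big((a_{k-1} - a_k)^2 + (a_{k'-1} - a_{k'})^2\big)$. Summing over $k$ telescopes these contributions into $\mc O(m^2\, J_K)$, which is absorbed into the claimed $\mc O\!\left(\frac{m(m+1)}{2}\, J_K\right)$.

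The main obstacle will be the cross-pair bookkeeping in \eqref{eq.lemma4-1B}: because the window of length $2m+1$ is nearly twice that in Lemma~\ref{lemma3-1}, one must verify carefully how many adjacent $B_{k'}$'s can interact with a fixed $B_k$ under~\eqref{eq.DistBetweenJumps} and invoke AM--GM at the right moment to convert products of distinct jump sizes into squared jumps that sum to a constant multiple of $J_K$.
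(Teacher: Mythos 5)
Your proof is correct, and it reaches both bounds by a genuinely different route from the paper. For \eqref{eq.lemma4-1A} the paper fixes each jump $t_j$, partitions the outer index $i$ into three zones ($i\le t_j-2(m+1)$, the next $m$ indices, and the final $m+1$ indices before $t_j$) and enumerates the inner sum zone by zone; the factor $t_j$ in $H_K^{||}$ emerges from the first zone. You instead interchange the order of summation and bound the multiplicity of each $j\in B_k$ by $\min(j-1,2m+1)\le j-1$, which produces the $t_k$ dependence directly and gives $(m+1)H_K^{||}$ in two lines. Your route is cleaner and avoids the paper's case-by-case claims in the first zone (which, read literally as equalities, overstate the contribution of small $i$; the final bound survives only because the lemma is a one-sided $\mathcal O$ statement). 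Worth noting: had you kept $\min(j-1,2m+1)\le 2m+1$ you would have obtained the sharper bound $\mathcal O(m^2 J_K^{||})$, which does not involve the jump locations at all; your deliberate loosening is exactly what is needed to match the stated $H_K^{||}$ form. For \eqref{eq.lemma4-1B} the paper simply refers back to the proof of \eqref{eq.lemma3-1B}, which only counts interactions of $i$ and $j$ within the \emph{same} window $B_k$; with the widened inner range $[i+1,i+2m+1]$ a window can straddle $B_k$ and $B_{k'}$ for $k'-k\le 2$ when the jumps are close to the minimal separation $m+2$, and the paper leaves this unaddressed. Your explicit self-pair/cross-pair decomposition, with the $(m+1)^2$ pair count and the AM--GM conversion of $|a_{k-1}-a_k|\,|a_{k'-1}-a_{k'}|$ into squared jumps, closes precisely that gap and confirms the cross terms are $\mathcal O(m^2 J_K)$, absorbed into the stated order. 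In this respect your argument is more complete than the one in the paper.
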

\begin{pf}
    We begin by establishing \eqref{eq.lemma4-1A}. Recall that $t_j = \lfloor n\tau_j \rfloor$.
    As in Lemma~\ref{lemma3-1} here we also split the sum
    in three ways: let $\kappa_j = t_j-2(m+1)$, first we consider $i\leq \kappa_j$, then
    $\kappa_j+1 \leq i \leq \kappa_j+m$ and finally $\kappa_j+m+1 \leq i \leq t_j-1$.
    
    Assume that $i\leq \kappa_j$. It is not difficult to see that when $s=m+1,\ldots,2m+1$, 
    and $l=i+s$, $g_{l+m+1}-g_l=a_{j-1}-a_j$.
    Also, when $s\leq m$ and $l=i+s$, $g_{l+m+1}-g_l=0$. Hence,
    {\footnotesize
    \begin{align*}
        \sum_{i\leq \kappa_j}\sum_{l=i+1}^{i+2m+1}|g_{l+m+1}-g_l| 
        &= 
        \sum_{i\leq \kappa_j}\sum_{l=i+m+1}^{i+2m+1}|g_{l+m+1}-g_l|\\
        &=
        m\sum_{i\leq \kappa_j}\,|a_{j-1} - a_j|
        =
        \kappa_j \,m\,|a_{j-1} - a_j|.
    \end{align*}}    
    Next, for any $\kappa_j+1 \leq i \leq \kappa_j+m$, it is straightforward to see that 
    \[
        \sum_{l=i+1}^{i+2m+1}| g_{l+m+1} - g_l| = (m+1)|a_{j-1} - a_j|.
    \]
    
    Assume now that $\kappa_j+m+1 \leq i \leq t_j-1$. We begin by studying the particular
    case $i = \kappa_j+m+1 = t_j-(m+1)$. Then we get for $t=0,\ldots,m-1$ and $l=i+1+t$ 
    that $g_{l+m+1}-g_l = (a_{j-1} - a_j)$.
    Also, when $t\geq m$ and $l=i+1+t$, $g_{l+m+1}-g_l=0$. Hence,
    \[
        \sum_{l=i+1}^{i+2m+1}|g_{l+m+1}-g_l|
        =
        \sum_{l=i+1}^{i+m}|a_{j-1} - a_j|
        =
        m\,|a_{j-1} - a_j|
    \]
    Similar arguments allow us to see that for $i = \kappa_j + m + 1 + u$ where $u = 1, \ldots, m-1$,
    \begin{equation}~\label{eq.lemma4-1.aux}
    \sum_{l=i+1}^{i+2m+1}|g_{l+m+1}-g_l| = (m - u)|a_{j-1} - a_j|.
    \end{equation}
    
    Combining the arguments above, we have shown that the left-hand side of \eqref{eq.lemma4-1A}
    is bounded by
    \begin{align*}
        \sum_{j=1}^K&
        \left[ \sum_{i\leq \kappa_j} + \sum_{i=\kappa_j+1}^{\kappa_j+m} + \sum_{i=\kappa_j+m+1}^{\tau_j-1} \right]
        \sum_{l=i+1}^{i+2m+1}|g_{l+m+1} - g_l|\\
        &=
        m\,\sum_{j=1}^K\,\left[ t_j - 2(m+1) + (m+1) + \frac{m+1}{2} \right]|a_{j-1} - a_j|.
    \end{align*}
    
    In order to establish Eq.~\eqref{eq.lemma4-1B} we follow the proof of Eq.~\eqref{eq.lemma3-1B}
    and apply Eqs.~\eqref{eq.difference.g} and \eqref{eq.lemma4-1.aux}.
    This completes the proof.
\end{pf}\medskip

\begin{lemma}~\label{lemma4-proofs}
    Suppose that the conditions of Lemma~\ref{lemma4-1} are satisfied. 
    Then,
    \begin{align*}
        &\sum_{i=1}^{n_{m+1}-1}\sum_{j=i+1}^{n_{m+1}}\,
        \Delta_{m+1}(f_i + g_i)\,\Delta_{m+1}(f_j + g_j) \gamma_{|j-i-(m+1)|}\\
        &=
        d_{1}^{2}\gamma_{0}
        \left[
        2m(n-m-2) \omega^{2}\left(\frac{m+1}{n}\right)
        +
        \frac{m(m+1)}{2} \mc O\left(J_{K}\right)\right.\\
        &\left.+
        |d_1|m(m+1)\omega\left(\frac{m+1}{n}\right)J_{K}^{||}
        +
        \omega\left(\frac{m+1}{n}\right) \mc O\left(mH_{K}^{||}\right)
        \right]        
 \end{align*}       
   
\end{lemma}
\begin{pf}
	In this proof, we will denote $a_k=\Delta_{m+1}(f_k + g_k)$.
    For given $i$, due to $m$-dependency we get that $\gamma_{|j-i-(m+1)|}\neq 0$ when $j \leq i + 2m+1$.
    Consequently,
\begin{align*}
	&\left| \sum_{i=1}^{n_{m+1}-1}\,\sum_{j=i+1}^{n_{m+1}}\,a_i\,a_j\,\gamma_{|j-i-(m+1)|} \right|
	\leq
	\gamma_0\,\sum_{i=1}^{n_{m+1}-1}\,\sum_{j=i+1}^{i+2m+1}\,\left|a_i\,a_j \right| \notag \\
	&\leq
	\gamma_0\left\{
	d_1^2\,(2m)\,(n-m-2)\,\omega^2\left( \frac{m+1}{n} \right) 
	+	
	d_1^2\,\omega\left( \frac{m+1}{n} \right) 	
	\left[ 
	\sum_{i,j}| g_{j+m+1} -g_j | \notag \right.\right.\\
	&\left.\left.+ 	
	(2m+1)\sum_{i=1}^{n_{m+1}-1}| g_{i+m+1} -g_i | \right]
	+
	d_1^2\sum_{i=1}^{n_{m+1}-1}| g_{i+m+1} -g_i|\,\sum_{j=i+1}^{i+2m+1}|g_{j+m+1}-g_j|	 
	\right\} (\mbox{see Eq.~\ref{eq.lemma3-aux}}) \notag \\
	&=
	d_1^2\gamma_0 
	\left\{ 
	2m(n-m-2)\omega^2\left( \frac{m+1}{n} \right)\notag
	\right.\\
	&\left.+
	\omega\left( \frac{m+1}{n} \right)\left[ \mc O (m H_K^{||}) + |d_1|m(m+1) J_K^{||} \right]
	\quad (\mbox{due to Eq.\eqref{eq.lemma4-1A} and Eq.~\eqref{eq.lemma3-B}, respec.})\notag \right.\\
	&\left.+
	\mc O\left( \frac{m(m+1)}{2} J_K \right)
	\right\} (\mbox{due to Lemma~\ref{lemma4-1}, Eq.\eqref{eq.lemma4-1B}}).
\end{align*}                    
    This completes the proof.
\end{pf}\medskip

\begin{lemma}~\label{lemma5-proofs}
    Let $\gamma_h$ denote the autocovariance function of a stationary, $m$-dependent process. Then
    \begin{enumerate}
        \item $\sum_{i,j} \gamma_{|j-i|}^2 \leq \gamma_0^2(n_m-1)m$,
        
        \item $\sum_{i,j} \gamma_{|j-i-(m+1)|}^2 \leq \gamma_0^2 (n_m-1) ( 2m+1 )$,
        
        \item $\sum_{i,j} |\gamma_{|j-i|} \gamma_{|j-i-(m+1)|}| \leq \gamma_0^2(n_m-1)m$.
    \end{enumerate}
    Here $\sum_{i,j} = \sum_{i=1}^{n_m-1}\,\sum_{j=i+1}^{n_m}$.
\end{lemma}
\begin{pf}
    \begin{enumerate}
        \item $\sum_{i,j} \gamma_{|j-i|}^2 
        = 
        \gamma_0^2\sum_{i,j}(\frac{\gamma_{j-i}}{\gamma_0})^2 \leq \gamma_0^2 \sum_{i=1}^{n_m-1}\sum_{j=i+1}^{i+m}\,1 
        = 
        \gamma_0^2 (n_m-1) m$.
    The inequality follows from the $m$-dependency.
    
        \item First note that
        \[
            |j-i-(m+1)| 
            = 
            \begin{cases}
                j-i-(m+1) & \mbox{ for }j\geq i+(m+1) \\
                i+m+1-j & \mbox{ for }j<i+m+1
            \end{cases}.
        \]
        Then, recall that $\gamma_{|j-i-(m+1)|}\neq 0$ when $|j-i-(m+1)|\leq m$. Intersecting these two subsets we get that
        $\gamma_{|j-i-(m+1)|}\neq 0$ for $i+1\leq j \leq i+2m+1$. The rest of the proof is similar to that of {\bf 1.}
        
        \item Follows from {\bf 1} and {\bf 2}.
    \end{enumerate}
\end{pf}

\bibliographystyle{apalike}
\bibliography{MultiscaleBibUpdate}  

%
\end{document}